\newtheorem{theorem}{Theorem}[section]
\newtheorem{lemma}{Lemma}[section]
\theoremstyle{definition}
\newtheorem{definition}{Definition}[section]
\newtheorem{corollary}{Corollary}[section]
\newtheorem{prop}{Proposition}[section]
\newtheorem{example}{Example}[section]
\theoremstyle{remark}
\numberwithin{equation}{section}
\begin{document}
\title[Characterization of warped product submanifolds of $(LCS)_n$-Manifolds]{Characterization of warped product submanifolds of Lorentzian
concircular structure Manifolds}
\author[S. K. Hui$^*$, L-.I. Piscoran and T. Pal]{SHYAMAL KUMAR HUI$^*$, LAURIAN-IOAN PISCORAN and TANUMOY PAL}
\subjclass[2010]{53C15, 53C25, 53C40.} \keywords{$(LCS)_n$-manifold, CR- submanifold, pseudo slant submanifold, bi-slant submanifold,
 warped product submanifold\\$^*$Corresponding author}.
%%%%%%%%%%%%%%%%%%%%%
\begin{abstract}
Recently Hui et al. (\cite{HAP}, \cite{HAN}) studied contact CR-warped product submanifolds and also warped product
pseudo-slant submanifolds of a $(LCS)_n$-manifold $\bar{M}$. In this paper we have studied the characterization for both
these classes of warped product submanifolds. It is also shown that there do not exists any proper warped product bi-slant
submanifold of a $(LCS)_n$-manifold. Although we constructed an example of a bi-slant submanifold of $(LCS)_n$-manifold.
\end{abstract}
%%%%%%%%%%%%%%%%%%%%%%5
\maketitle
%%%%%%%%%%%%%%%%%%%%%%%%%%%%%%%%%%%%%%%%%%%%%%%%%%%%%%%%%%%%%%%%%
\section{Introduction}
 As a generalization of Riemannian product manifold  Bishop and O'Neill \cite{BISHOP} introduced the notion of warped product manifold and
later it was studied in (\cite{ATCEKEN1}, \cite{ATCEKEN2}, \cite{HUOM}, \cite{UD1}, \cite{UDDIN}-\cite{UKK}).
 The existence or non-existence of warped product manifolds plays
some important role in differential geometry as well as physics.\\
%%%%%%%%%%%%%%%%%%%%%%%%%%
\indent As a generalization of LP-Sasakian manifold introduced independently by  Matsumoto \cite{8} and also by Mihai and
Rosca \cite{9}, Shaikh \cite{11} introduced the notion of Lorentzian concircular structure manifolds
(briefly, $(LCS)_n$-manifolds) with an example. Then Shaikh and Baishya (\cite{12}, \cite{13})
investigated the applications of $(LCS)_n$-manifolds to the general
theory of relativity and cosmology. The $(LCS)_n$-manifolds are also
studied in (\cite{SKH}, \cite{SHAIKH1}, \cite{SHAIKH2}-\cite{14}).\\
%%%%%%%%%%%%%%%%%%%%%%%%%%%%%%%%%%%%%%%%%%%%
\indent Due to important applications in applied mathematics and theoretical physics, the geometry of submanifolds
has become a subject of growing interest. Analogous to almost Hermitian manifolds, the invariant and anti-invariant
subamnifolds are depend on the behaviour of almost contact metric structure $\phi$. The study of the differential geometry
of a contact CR-submanifold as a generalization of invariant and anti-invariant subamnifold was introduced by Bejancu \cite{BEJ1}.
In this connection it is mentioned that different class of submanifolds of $(LCS)_n$-manifolds are studied in (\cite{ATCE2}, \cite{HUI2}, \cite{HUI1},
 \cite{HAP}, \cite{HP}, \cite{HPP}, \cite{SHAIKH9}).  Recently Hui et al. (\cite{HAP}, \cite{HAN}) studied contact CR-warped product submanifolds and also warped product pseudo-slant submanifolds of a $(LCS)_n$-manifold $\bar{M}$. In this paper we have studied the characterization for both
these classes of warped product submanifolds. An example of bi-slant submanifold of $(LCS)_n$-manifold is 
constructed. However, it is also shown that there do not exists any proper warped product bi-slant
submanifold of a $(LCS)_n$-manifold.
%%%%%%%%%%%%%%%%%%%%%%
\section{Preliminaries}
Let $\bar{M}$ be an $n$-dimensional Lorentzian manifold \cite{NIL} admitting a unit
timelike concircular vector field $\xi$, called the characteristic
vector field of the manifold. Then we have
%%%%%%%%%5
\begin{equation}
\label{2.1}
g(\xi, \xi)=-1.
\end{equation}
%%%%%%%%%%%
Since $\xi$ is a unit concircular vector field, it follows that
there exists a non-zero 1-form $\eta$ such that for
%%%%%%%%%5
\begin{equation}
\label{2.2}
g(X,\xi)=\eta(X),
\end{equation}
%%%%%%%%%%%%%%
the equation of the following form holds \cite{15}
%%%%%%%%%5
\begin{equation}
\label{2.3}
(\bar\nabla _{X}\eta)(Y)=\alpha \{g(X,Y)+\eta(X)\eta(Y)\},
\ \ \ (\alpha\neq 0)
\end{equation}
%%%%%%%%%%
%%%%%%%%%5
\begin{equation}
\label{2.4}
\bar\nabla _{X}\xi = \alpha \{X +\eta(X)\xi\}, \ \ \ \alpha\neq 0
\end{equation}
%%%%%%%%%%
for all vector fields $X$, $Y$, where $\bar{\nabla}$ denotes the
operator of covariant differentiation with respect to the Lorentzian
metric $g$ and $\alpha$ is a non-zero scalar function satisfies
%%%%%%%%%5
\begin{equation}
\label{2.5}
{\bar\nabla}_{X}\alpha = (X\alpha) = d\alpha(X) = \rho\eta(X),
\end{equation}
%%%%%%%%%%%%%%
$\rho$ being a certain scalar function given by $\rho=-(\xi\alpha)$.
If we put
%%%%%%%%%5
\begin{equation}
\label{2.6}
\phi X=\frac{1}{\alpha}\bar\nabla_{X}\xi,
\end{equation}
%%%%%%%%%%%%%
then from (\ref{2.4}) and (\ref{2.6}) we have
%%%%%%%%%5
\begin{equation}
\label{2.7}
 \phi X = X+\eta(X)\xi,
\end{equation}
%%%%%%%%%%%
%%%%%%%%%5
\begin{equation}
\label{2.8}
g(\phi X,Y) = g(X,\phi Y)
\end{equation}
%%%%%%%%%%%
from which it follows that $\phi$ is a symmetric (1,1) tensor and
called the structure tensor of the manifold. Thus the Lorentzian
manifold $\bar{M}$ together with the unit timelike concircular vector
field $\xi$, its associated 1-form $\eta$ and an (1,1) tensor field
$\phi$ is said to be a Lorentzian concircular structure manifold
(briefly, $(LCS)_{n}$-manifold), \cite{11}. Especially, if we take
$\alpha=1$, then we can obtain the LP-Sasakian structure of
Matsumoto \cite{8}. In a $(LCS)_{n}$-manifold $(n>2)$ $\bar{M}$, the following
relations hold \cite{11}:
%%%%%%%%%5
\begin{equation}
\label{2.9}
\eta(\xi)=-1,\ \ \phi \xi=0,\ \ \ \eta(\phi X)=0,\ \ \
g(\phi X, \phi Y)= g(X,Y)+\eta(X)\eta(Y),
\end{equation}
%%%%%%%%%5
\begin{equation}
\label{2.10}
\phi^2 X= X+\eta(X)\xi,
\end{equation}
%%%%%%%%%5%%%%%%%%%5
\begin{equation}
\label{2.11}
(\bar{\nabla}_{X}\phi)Y=\alpha\{g(X,Y)\xi+2\eta(X)\eta(Y)\xi+\eta(Y)X\},
\end{equation}
%%%%%%%%%5
\begin{equation}
\label{2.12}
(X\rho)=d\rho(X)=\beta\eta(X)
\end{equation}
%%%%%%%%%%%%%%
for all $X,\ Y,\ Z\in\Gamma(T\bar{M})$ and $\beta = -(\xi\rho)$ is a scalar function.\\
Let $M$ be a submanifold of  $\bar{M}$ with
induced metric $g$. Also let $\nabla$ and $\nabla^{\perp}$ are the
induced connections on the tangent bundle $TM$ and the normal bundle
$T^{\perp}M$ of $M$ respectively. Then the Gauss and Weingarten
formulae are given by
%%%%%%%%%%%%%%%%%%%
\begin{equation}
\label{2.13} \bar{\nabla}_{X}Y = \nabla_{X}Y + h(X,Y)
\end{equation}
%%%%%%%%%%%%%%%%%
and
%%%%%%%%%%%%%%%%
\begin{equation}
\label{2.14} \bar{\nabla}_{X}V = -A_{V}X + \nabla^{\perp}_{X}V
\end{equation}
%%%%%%%%%%%%%%%%%
for all $X,Y\in\Gamma(TM)$ and $V\in\Gamma(T^{\perp}M)$, where $h$
and $A_V$ are second fundamental form and the shape operator
(corresponding to the normal vector field $V$) respectively for the
immersion of $M$ into $\bar{M}$ and they are related by
%%%%%%%%%%%%%%%%
$\label{2.15} g(h(X,Y),V) = g(A_{V}X,Y)$,
%%%%%%%%%%%%%%%%%
for any $X,Y\in\Gamma(TM)$ and $V\in\Gamma(T^{\perp}M)$.\\
For any $X\in \Gamma(TM)$ and $V\in \Gamma(T^\bot M)$, we can write
\begin{eqnarray}
% \nonumber % Remove numbering (before each equation)
\label{2.16}
\text{(a)} \  \phi X = PX+QX,\ \  \text{(b)}\  \phi V= bV+cV
\end{eqnarray}
where $\phi X,\ bV$ are the tangential components and $QX,\ cV$ are the normal components.\\
%%%%%%%%%%%%%%%%%%%%%%%%%%%%%%%%%%%%%%
A submanifold $M$ of a $(LCS)_n$-manifold $\bar{M}$ is said to be invariant if $\phi(T_pM)\subseteq T_pM$, for every $p\in M$ and
anti-invariant if $\phi T_pM\subseteq T^\bot_pM$, for every $p\in M$.\\
A submanifold $M$ of a $(LCS)_n$-manifold $\bar{M}$ is said to be a CR-submnaifold if there is a differential distribution
$\mathcal{D}:p\rightarrow \mathcal{D}_p \subseteq T_p M$ such that $\mathcal{D}$ is an invariant distribution and the orthogonal
complementary distribution $\mathcal{D}^\bot$ is anti-invariant.\\
The normal space of a CR-submanifold $M$ is decomposed as $T^\bot M =Q\mathcal{D^\bot}\oplus\nu$,
where $\nu$ is the invariant normal subbundle of $M$ with respect to $\phi$.\\
A submanifold $M$ of a $(LCS)_n$-manifold $\bar{M}$ is said to be slant if for each non-zero vector $X\in T_pM$ the angle between $\phi X$
and $T_pM$ is a constant, i.e. it does not depend on the choice of $p\in M$.\\
A submanifold $M$ of a $(LCS)_n$-manifold $\bar{M}$ is said to be a pseudo-slant submanifold if there exists a pair of
orthogonal distributions $\mathcal{D^\bot}$ and $\mathcal{D^\theta}$ such that \\
(i) TM admits the orthogonal direct decomposition $TM=\mathcal{D^\bot}\oplus\mathcal{D^\theta}$,\\
(ii) The distribution $\mathcal{D^\bot}$ is anti-invariant, \\
(iii) The distribution $\mathcal{D^\theta}$ is slant with slant angle $\theta\neq 0,\frac{\pi}{2}$.\\
From the definition it is clear that if $\theta=0$, then $M$ is a CR-submanifold. We say that a pseudo-slant submanifold is proper if
$\theta\neq 0,\frac{\pi}{2}$. The normal space of a pseudo-slant submanifold $M$ is decomposed as $T^\bot M =Q\mathcal{D^\theta}\oplus\phi\mathcal{D}^\bot\oplus\nu$.\\
%%%%%%%%%%%%%%%%%%%%%%%%%%%%%%%%%%%%%%%%%%%%%%%%%%%%%%%
On a slant submanifold $M$ of a $(LCS)_n$-manifold $\bar{M}$, we have \cite{HUI2}
\begin{eqnarray}
% \nonumber % Remove numbering (before each equation)
\label{2.17}  P^2X &=&\cos^2\theta[X+\eta(X)\xi],  \\
\label{2.18} Q^2X &=&\sin^2\theta[X+\eta(X)\xi],
\end{eqnarray}
where $\theta$ is the slant angle of $M$ in $\bar{M}$.\\
From (\ref{2.17}) and (\ref{2.18}), we get
\begin{eqnarray}
% \nonumber % Remove numbering (before each equation)
\label{2.19}g(PX,PY) &=& \cos^2\theta[g(X,Y)+\eta(X)\eta(Y)] ,\\
\label{2.20}g(QX,QY) &=& \sin^2\theta[g(X,Y)+\eta(X)\eta(Y)],
\end{eqnarray}
for any $X,\ Y\in\Gamma(TM)$.\\
Also for a slant submanifold from (\ref{2.16}) and (\ref{2.17}), we have
\begin{equation}\label{2.21}
bQX=\sin^2\theta(X+\eta(X)\xi)\ \ \text{and}\ \ cQX=-QPX
\end{equation}
%%%%%%%%%%%%%%%%%%%%%%%%%%%%%%%%%%%%%%
\indent For a Riemannian manifold $\bar{M}$ of dimension $n$ and a smooth function $f$ on
$\bar{M}$, $\nabla f$, the gradient of $f$ which is defined by
\begin{equation}
\label{2.19}
g(\nabla f,X)=X(f)(\text{or}=(Xf))
\end{equation}
for any $X\in \Gamma (TM)$.\\

%%%%%%%%%%%%%%
\begin{definition}\cite{BISHOP}
Let $(N_1,g_1)$ and $(N_2,g_2)$ be two Riemannian manifolds with Riemannian metric $g_1$
and $g_2$ respectively and $f$ be a positive definite smooth function on $N_1$. The warped product
of $N_1$ and $N_2$ is the Riemannian manifold $N_1\times_{f}N_2 = (N_1\times N_2,g)$, where
%%%%%%%%%%%%%%%%%%%%
\begin{equation}
\label{2.20}
g=g_1+f^2g_2.
\end{equation}
%%%%%%%%%%%%%%%%%%%
\end{definition}
%%%%%%%%%%%%%%%%%%%%
\noindent A warped product manifold $N_1\times_{f}N_2$ is said to be trivial if the warping function $f$ is constant.\\
%%%%%%%%%%%%%%%%%%%%%%%%%%%%%%%%%%%%%%%%%%%
\begin{prop}\cite{NIL}
Let $M=N_1\times_{f}N_2$ be a warped product manifold. Then
%%%%%%%%%%%%%%%%
\begin{eqnarray*}
\nabla_UX = \nabla_XU = (X\ln f) U,
\end{eqnarray*}
%%%%%%%%%%%%%%%%%%
for any $X$, $Y\in\Gamma(TN_1)$ and $U\in\Gamma(TN_2)$.
\end{prop}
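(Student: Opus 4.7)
My plan is to derive both equalities directly from the Koszul formula applied to the warped product metric $g = g_1 + f^2 g_2$, exploiting the fact that vector fields lifted from $N_1$ commute with those lifted from $N_2$.

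First I would observe that for $X \in \Gamma(TN_1)$ and $U \in \Gamma(TN_2)$, the lifts satisfy $[X,U]=0$, and that $g(X,U)=0$, while $g(U,V)=f^{2}g_{2}(U,V)$ for $V\in\Gamma(TN_{2})$. Since the Levi-Civita connection $\nabla$ of $M$ is torsion-free, $\nabla_{U}X - \nabla_{X}U = [U,X] = 0$, which immediately yields the first equality $\nabla_{U}X = \nabla_{X}U$. Thus it suffices to identify $\nabla_{X}U$.

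Next I would compute $g(\nabla_{X}U, \cdot)$ by testing against an arbitrary $V\in\Gamma(TN_{2})$ and an arbitrary $Y\in\Gamma(TN_{1})$. Using the Koszul formula
\begin{eqnarray*}
2g(\nabla_{X}U,Z) &=& Xg(U,Z) + Ug(X,Z) - Zg(X,U) \\
&& +\, g([X,U],Z) - g([U,Z],X) + g([Z,X],U),
\end{eqnarray*}
I would plug in $Z=V\in\Gamma(TN_{2})$: the bracket terms involving $X$ vanish because $[X,U]=[V,X]=0$; the term $g([U,V],X)$ vanishes since $[U,V]\in\Gamma(TN_{2})$ is $g$-orthogonal to $X$; the only surviving piece is $Xg(U,V) = X(f^{2})\,g_{2}(U,V) = 2(X\ln f)\,g(U,V)$. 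Hence $g(\nabla_{X}U,V) = (X\ln f)\,g(U,V)$ for every $V\in\Gamma(TN_{2})$. Plugging in $Z=Y\in\Gamma(TN_{1})$ instead, all six terms in the Koszul formula vanish (each either involves $g(U,Y)=g(X,U)=0$, or a bracket with a factor landing in the orthogonal bundle, or a derivative along $N_{2}$ of a function pulled back from $N_{1}$), so $g(\nabla_{X}U,Y)=0$.

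The two computations together show that $\nabla_{X}U$ is tangential to $N_{2}$ with $g(\nabla_{X}U,V) = (X\ln f)\,g(U,V)$ for all $V\in\Gamma(TN_{2})$; non-degeneracy of $g$ on $TN_{2}$ then forces $\nabla_{X}U = (X\ln f)\,U$. The main thing to keep straight is the interplay between the two metrics and the commuting of lifted vector fields; once these are isolated, the Koszul formula collapses essentially to a single non-vanishing term and the result follows with no further obstacle.
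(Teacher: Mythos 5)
Your proof is correct; the paper does not prove this proposition at all but simply cites it from O'Neill's book, and your Koszul-formula computation is precisely the standard argument given in that reference (torsion-freeness gives $\nabla_UX=\nabla_XU$, and testing $\nabla_XU$ against lifts from each factor collapses the Koszul formula to the single term $X(f^2)g_2(U,V)$). No gaps.
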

%%%%%%%%%%%%%%%%%%%%%%%%%%%%%%%%%%%%%%%%%%
\begin{theorem}[Hiepko's Theorem]\cite{HIPKO}
Let $\mathcal{D}_1$ and $\mathcal{D}_2$ be two orthogonal complementary distributions on a Riemannian manifold $M$. Suppose that $\mathcal{D}_1$ and $\mathcal{D}_2$ are both involutive such that $\mathcal{D}_1$ is a totally geodesic foliation and $\mathcal{D}_2$ is a spherical foliation. Then $M$ is locally isometric to a non trivial warped product $M_1\times_f M_2$, where $M_1$ and $M_2$ are integral manifolds of $\mathcal{D}_1$ and $\mathcal{D}_2$,
respectively.
\end{theorem}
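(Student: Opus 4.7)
The plan is to apply the Frobenius theorem to produce adapted bi-foliation coordinates, then unwind the two structural hypotheses one at a time to identify the functional dependence of each block of the metric, and finally read off the warped product structure.

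First, since $\mathcal{D}_1$ and $\mathcal{D}_2$ are both involutive, orthogonal, and complementary, the Frobenius theorem gives local coordinates $(x^1,\dots,x^r,y^1,\dots,y^s)$ near each point of $M$ in which $\mathcal{D}_1=\mathrm{span}\{\partial/\partial x^i\}$ and $\mathcal{D}_2=\mathrm{span}\{\partial/\partial y^\alpha\}$. Orthogonality kills the cross terms, so the metric takes the block form
\[
g=g_{ij}(x,y)\,dx^i dx^j+g_{\alpha\beta}(x,y)\,dy^\alpha dy^\beta.
\]
The leaves of $\mathcal{D}_1$ are the slices $\{y=\mathrm{const}\}$ and those of $\mathcal{D}_2$ the slices $\{x=\mathrm{const}\}$, so the entire argument reduces to controlling how the two blocks depend on the two sets of variables.

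Next I would use the totally geodesic hypothesis: $\nabla_X Y\in \Gamma(\mathcal{D}_1)$ for $X,Y\in\Gamma(\mathcal{D}_1)$ says $g(\nabla_{\partial_{x^i}}\partial_{x^j},\partial_{y^\alpha})=0$, which by the Koszul formula (using $g_{i\alpha}=0$) is exactly $\partial_{y^\alpha}g_{ij}=0$. Hence $g_{ij}=g_{ij}(x)$ is the intrinsic metric of a single leaf $M_1$ of $\mathcal{D}_1$. The spherical hypothesis on $\mathcal{D}_2$ means each $\mathcal{D}_2$-leaf is totally umbilical in $M$ with mean curvature vector $H\in\Gamma(\mathcal{D}_1)$ which is parallel in the normal bundle of the leaf, i.e. $h(U,V)=g(U,V)\,H$ and $\nabla^\perp_U H=0$ for $U,V\in\Gamma(\mathcal{D}_2)$. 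The umbilic equation, again via Koszul, translates to
\[
\partial_{x^i}g_{\alpha\beta}=-2\,g_{\alpha\beta}\,g(H,\partial_{x^i}),
\]
while the parallel-normal condition forces $H$ to be $y$-independent and the $\mathcal{D}_1$-dual one-form $-H^{\flat}$ to be closed on each $\mathcal{D}_1$-leaf.

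Closedness lets me write $-H^{\flat}=d\ln f$ for a positive function $f=f(x)$ on $M_1$. Inserting $g(H,\partial_{x^i})=-\partial_{x^i}\ln f$ into the previous ODE gives $\partial_{x^i}g_{\alpha\beta}=2\,g_{\alpha\beta}\,\partial_{x^i}\ln f$, which integrates to $g_{\alpha\beta}(x,y)=f(x)^2\,\tilde g_{\alpha\beta}(y)$ after normalizing at a reference point $x=x_0$ and absorbing $f(x_0)$ into $\tilde g$. Setting $g_1=g_{ij}(x)dx^idx^j$ and $g_2=\tilde g_{\alpha\beta}(y)dy^\alpha dy^\beta$, one reads off $g=g_1+f^2 g_2$, which is precisely the warped product metric on $M_1\times_f M_2$; non-triviality follows since $H\neq 0$ whenever $\mathcal{D}_2$ is not itself totally geodesic. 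The main obstacle is the passage from \emph{``$H$ is the mean curvature of a spherical foliation''} to \emph{``$H=-\nabla\ln f$ for a single function $f(x)$''}: this is where the parallel-in-normal-bundle condition is indispensable, since mere umbilicity is not enough to force $H^{\flat}$ to be closed or $y$-independent, and without both properties one cannot integrate the conformal ODE consistently across the fibers.
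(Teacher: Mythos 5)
The paper does not prove this statement: it is quoted verbatim as Theorem 2.1 with a citation to Hiepko's 1979 paper, so there is no in-paper argument to compare yours against. Judged on its own, your coordinate proof of the local version is essentially correct and is the standard route: the simultaneous Frobenius chart for two complementary involutive distributions, the Koszul computation showing that $\mathcal{D}_1$ totally geodesic is equivalent to $\partial_{y^\alpha}g_{ij}=0$, and the umbilicity equation $\partial_{x^i}g_{\alpha\beta}=-2g_{\alpha\beta}\,g(H,\partial_{x^i})$ integrating to $g_{\alpha\beta}=f^2\tilde g_{\alpha\beta}$ once $-H^\flat=d\ln f$ is established. One attribution is off, though it does not break the argument: the closedness of $H^\flat$ along the $\mathcal{D}_1$-directions does \emph{not} come from $\nabla^\perp_U H=0$, which only involves $y$-derivatives and cannot see $\partial_{x^i}H_j-\partial_{x^j}H_i$. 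It comes instead from the umbilicity equation itself: fixing an index with $g_{\alpha\alpha}>0$ gives $\partial_{x^i}\ln g_{\alpha\alpha}=-2H_i$, and equality of mixed partials forces $\partial_{x^j}H_i=\partial_{x^i}H_j$. The parallelism hypothesis is what gives $y$-independence of $H$ (using, as you should note explicitly, that $g(\nabla_{\partial_{y^\alpha}}\partial_{x^j},\partial_{x^i})=0$ by the already-established totally geodesic condition), and that is exactly the role it needs to play. Finally, non-triviality of the warped product requires $H\neq0$, which holds under the convention--used implicitly in this paper, where spherical leaves are ``extrinsic spheres'' with nonzero parallel mean curvature--that a spherical foliation has nonvanishing mean curvature; you should state that convention rather than leave it as an aside.
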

%%%%%%%%%%%%%%%%%%%%%%%%%%%%%%
\section{characterization for contact CR-warped product submanifolds}
In \cite{HAN} it is shown that contact CR-warped product submanifolds of $\bar{M}$ of the form
$N_\bot\times_fN_T$, where $N_T$ and $N_\bot$ are invariant and anti-invariant submanifolds of $\bar{M}$ respectively,
 exists if $\xi \in \Gamma (TN_\bot)$ and does not exists if $\xi \in \Gamma (TN_T)$.
In this section we find a characterization for a submanifolds $M$ of $\bar{M}$ to be contact CR-warped product of the form
 $N_\bot\times_fN_T$ such that $\xi \in \Gamma(TN_\bot)$. First we prove the following Lemma:
%%%%%%%%%%%%%%%%%%%%%%%%%%%%%%%%%%%%%%%%%%%%%%%%%%%%%%
\begin{lemma}
Let $M=N_\bot\times_fN_T$ be a warped product submanifold of $\bar{M}$ such that
$\xi \in \Gamma(T N_\bot)$, then
\begin{eqnarray}
% \nonumber % Remove numbering (before each equation)
\label{3.1}
 g(h(X,Y),\phi Z)=-\alpha \eta(Z)g(X,Y)-(Z\ln f)g(\phi X,Y)
\end{eqnarray}
for $X,\ Y\in \Gamma(TN_T)$ and $Z\in \Gamma(TN_\bot)$.
\end{lemma}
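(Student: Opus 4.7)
The plan is to compute $g(\bar\nabla_X Y,\phi Z)$ in two different ways: on one hand, by the Gauss formula (\ref{2.13}) together with the anti-invariance of $N_\bot$ (so that $\phi Z\in\Gamma(T^\bot M)$), this equals exactly $g(h(X,Y),\phi Z)$; on the other hand, metric compatibility of $\bar\nabla$ and the structure equation (\ref{2.11}) let me transfer the derivative onto $\phi Z$ and produce the terms on the right-hand side of (\ref{3.1}).

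Concretely, I would first move $\phi$ onto the other slot via (\ref{2.8}): since $N_T$ is invariant, $\phi Y\in\Gamma(TN_T)$, and since the warped-product factors are orthogonal with $Z\in\Gamma(TN_\bot)$, one has $g(Y,\phi Z)=g(\phi Y,Z)=0$. Metric compatibility then gives
$$g(\bar\nabla_X Y,\phi Z)=X\bigl(g(Y,\phi Z)\bigr)-g(Y,\bar\nabla_X\phi Z)=-g(Y,\bar\nabla_X\phi Z).$$

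Next I would expand $\bar\nabla_X\phi Z=(\bar\nabla_X\phi)Z+\phi\bar\nabla_X Z$. For the first piece, I apply (\ref{2.11}) with the key observation that because $\xi\in\Gamma(TN_\bot)$ and the two factors are orthogonal, $\eta(X)=g(X,\xi)=0$ and $g(X,Z)=0$; this collapses (\ref{2.11}) to $(\bar\nabla_X\phi)Z=\alpha\eta(Z)X$. For the second piece, Gauss (\ref{2.13}) together with Proposition 2.1 (applied with $N_1=N_\bot$, $N_2=N_T$) gives $\bar\nabla_X Z=(Z\ln f)X+h(X,Z)$, hence $\phi\bar\nabla_X Z=(Z\ln f)\phi X+\phi h(X,Z)$. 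Pairing the whole expression with $Y$ and using (\ref{2.8}) once more, the term $g(Y,\phi h(X,Z))=g(\phi Y,h(X,Z))$ vanishes since $\phi Y$ is tangent and $h(X,Z)$ is normal.

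Collecting the surviving contributions produces
$$g(Y,\bar\nabla_X\phi Z)=\alpha\eta(Z)g(X,Y)+(Z\ln f)g(\phi X,Y),$$
and the sign introduced by metric compatibility yields (\ref{3.1}). The step I expect to be the most delicate is the reduction of $(\bar\nabla_X\phi)Z$ via (\ref{2.11}): one must carefully verify that two of its three terms vanish thanks to the orthogonality of $TN_T$ and $TN_\bot$ and the hypothesis $\xi\in\Gamma(TN_\bot)$. Beyond this, the proof is bookkeeping with the $\phi$-symmetry and the warped-product formula, and I do not anticipate any conceptual obstruction.
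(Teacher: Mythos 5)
Your proof is correct and follows essentially the same route as the paper's: both reduce $g(h(X,Y),\phi Z)$ to a covariant derivative, invoke (\ref{2.11}) to isolate the $\alpha\eta(Z)g(X,Y)$ term, and use Proposition 2.1 for the $(Z\ln f)g(\phi X,Y)$ term. The only (cosmetic) difference is that you differentiate $\phi$ against $Z$ via metric compatibility on $g(Y,\bar\nabla_X\phi Z)$, whereas the paper moves $\phi$ onto $\bar\nabla_XY$ and differentiates it against $Y$; both collapses of (\ref{2.11}) are verified correctly.
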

\begin{proof}
 For $X,\ Y\in \Gamma(TN_T)$ and $Z,\xi\in \Gamma(TN_\bot)$ we have
from (\ref{2.11}) and (\ref{2.13}) that
\begin{eqnarray*}
% \nonumber % Remove numbering (before each equation)
  g(h(X,Y),\phi Z) &=& g(\bar{\nabla}_XY,\phi Z )\\
   &=&g(\bar{\nabla}_X\phi Y,Z)-g((\bar{\nabla}_X\phi)Y,Z)  \\
   &=&-g(\phi Y,\nabla_XZ)-\alpha g(X,Y)\eta(Z).
\end{eqnarray*}
By virtue of Proposition 2.1 from the above relation we get (\ref{3.1}).
\end{proof}
\par Now interchanging $X$ by $\phi X$ and $Y$ by $\phi Y$, we get the following respective relations
\begin{eqnarray}
% \nonumber % Remove numbering (before each equation)
\label{3.2}  g(h(\phi X,Y),\phi Z) &=& -\alpha \eta(Z)g(\phi X,Y)-Z(\ln f)g(X,Y), \\
\label{3.3}  g(h(X,\phi Y),\phi Z) &=& -\alpha \eta(Z)g(\phi X,Y)-Z(\ln f)g(X,Y), \\
\label{3.4}  g(h(\phi X,\phi Y),\phi Z) &=& -\alpha \eta(Z)g(X,Y)-Z(\ln f)g(\phi X,Y).
\end{eqnarray}
\begin{corollary}
Let $M=N_\bot\times_fN_T$ be a warped product submanifold of $\bar{M}$ such that
$\xi \in \Gamma(T N_\bot)$. Then
\begin{eqnarray*}
% \nonumber % Remove numbering (before each equation)
 g(h(\phi X,Y),\phi Z)   &=& g(h(X,\phi Y),\phi Z) \\
\text{and}\ \ \ \ \  g(h(\phi X,\phi Y),\phi Z)&=&g(h(X,Y),\phi Z)
\end{eqnarray*}
for $X,\ Y\in \Gamma(TN_T)$ and $Z\in \Gamma(TN_\bot)$.
\end{corollary}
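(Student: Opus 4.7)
The plan is to read the corollary off directly from the four displayed identities (\ref{3.1})--(\ref{3.4}), so the main task is simply to justify that those four equations are valid specializations of Lemma 3.1 and then match right-hand sides. Since $M=N_\bot\times_f N_T$ and $\xi\in\Gamma(TN_\bot)$, and since $TN_T$ and $TN_\bot$ are orthogonal distributions, any $X,Y\in\Gamma(TN_T)$ satisfy $\eta(X)=g(X,\xi)=0$ and $\eta(Y)=0$. By (\ref{2.10}) this forces $\phi^{2}X=X$ and $\phi^{2}Y=Y$, and together with the symmetry $g(\phi X,Y)=g(X,\phi Y)$ from (\ref{2.8}) this is what makes the substitutions $X\mapsto\phi X$ and $Y\mapsto\phi Y$ in (\ref{3.1}) reproduce exactly the forms (\ref{3.2}), (\ref{3.3}), (\ref{3.4}) without introducing extra $\eta$-terms.

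More concretely, I would first verify that $N_T$ being invariant means $\phi X,\phi Y\in\Gamma(TN_T)$, so Lemma 3.1 applies to any of the pairs $(\phi X,Y)$, $(X,\phi Y)$, $(\phi X,\phi Y)$. Plugging $\phi X$ in place of $X$ in (\ref{3.1}) gives a right-hand side $-\alpha\eta(Z)g(\phi X,Y)-(Z\ln f)g(\phi^{2}X,Y)$, which collapses to the right-hand side of (\ref{3.2}) using $\phi^{2}X=X$. The same move produces (\ref{3.3}) from the substitution $Y\mapsto\phi Y$, and (\ref{3.4}) from the simultaneous substitution, each time using $\phi^{2}=\mathrm{Id}$ on $TN_T$ and the symmetry of $\phi$.

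With all four identities in hand, the corollary is pure bookkeeping: the right-hand sides of (\ref{3.2}) and (\ref{3.3}) are term-for-term identical, hence $g(h(\phi X,Y),\phi Z)=g(h(X,\phi Y),\phi Z)$; and the right-hand sides of (\ref{3.1}) and (\ref{3.4}) are likewise identical, hence $g(h(\phi X,\phi Y),\phi Z)=g(h(X,Y),\phi Z)$. There is no genuine obstacle here; the only place where care is needed is making sure $\eta$ vanishes on $TN_T$ before applying $\phi^{2}$, which is exactly where the hypothesis $\xi\in\Gamma(TN_\bot)$ (as opposed to $\xi\in\Gamma(TN_T)$, which was already ruled out in \cite{HAN}) is used.
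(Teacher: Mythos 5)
Your proposal is correct and follows essentially the same route as the paper: the paper obtains (\ref{3.2})--(\ref{3.4}) by substituting $\phi X$ and $\phi Y$ into (\ref{3.1}) and reads the corollary off by comparing right-hand sides, exactly as you do. Your added care in checking $\eta(X)=\eta(Y)=0$, $\phi^2=\mathrm{Id}$ on $TN_T$, and the invariance of $N_T$ only makes explicit what the paper leaves implicit.
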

%%%%%%%%%%%%%%%%%%%%%%%%%%%%%%%%%%%%%%%%%%%%%%%%%%%
Now we have the following characterization theorem:
%%%%%%%%%%%%%%%%%%%%%%%%%%%%%%%%%%%%%%%%%%%%%%%%%%%%%%%%%%%%%%%
\begin{theorem}
Let $M$ be a contact CR-submanifold of a $(LCS)_n$-manifold $\bar{M}$ such that $\xi$ is
tangent to the anti-invariant distribution $\mathcal{D}^\bot$. Then $M$ is locally a warped product submanifold if and only if
 \begin{equation}
 \label{3.5}
 A_{\phi Z}X=-\alpha \eta(Z)X-(Z\mu)\phi X
 \end{equation}
 for any $X\in\Gamma(\mathcal{D})$ and $Z\in \Gamma(\mathcal{D}^\bot)$ and also for some smooth function $\mu$ on $M$ such that
 $(Y\mu)=0$ for any $Y\in D$.
\end{theorem}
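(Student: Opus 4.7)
The plan is to prove both directions by an interplay of Lemma 3.1 and Hiepko's theorem.

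For the forward direction, I would start from $g(A_{\phi Z}X, Y) = g(h(X,Y), \phi Z)$ and split over $Y$. For $Y \in \Gamma(\mathcal{D})$, Lemma 3.1 rewrites the right-hand side as $g\bigl(-\alpha\eta(Z)X - (Z\ln f)\phi X,\; Y\bigr)$. For $Y = W \in \Gamma(\mathcal{D}^\bot)$, I would expand $\bar\nabla_X W$ via the Gauss formula, use Proposition 2.1 to identify its tangential part as $(W\ln f)X \in \Gamma(\mathcal{D})$, and combine (2.11) with $\bar\nabla_X(\phi Z) = (\bar\nabla_X\phi)Z + \phi\,\bar\nabla_X Z$ to conclude $g(h(X,W),\phi Z) = 0$, so that $A_{\phi Z}X$ has no $\mathcal{D}^\bot$-component. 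Setting $\mu := \ln f$, which depends only on the $N_\bot$-factor of the warped product, automatically gives $(Y\mu) = 0$ for $Y \in \Gamma(\mathcal{D}) = \Gamma(TN_T)$, and (3.5) follows.

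For the converse, I would assume (3.5) and verify the two hypotheses of Hiepko's theorem. Testing (3.5) against $W \in \Gamma(\mathcal{D}^\bot)$ first forces $g(h(X,W),\phi Z) = 0$; feeding this into $g(\nabla_{W_1}W_2, X) = g(\phi\,\bar\nabla_{W_1}W_2, X)$, which is valid by (2.7) together with $\eta(X) = 0$, and rewriting $\phi\,\bar\nabla_{W_1}W_2 = \bar\nabla_{W_1}(\phi W_2) - (\bar\nabla_{W_1}\phi)W_2$ via (2.11), shows $g(\nabla_{W_1}W_2, X) = 0$, so $\mathcal{D}^\bot$ is a totally geodesic foliation in $M$. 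Next, for $X, Y \in \Gamma(\mathcal{D})$ and $Z \in \Gamma(\mathcal{D}^\bot)$, the analogous $\phi$-swap applied to $g(\nabla_X Y, Z) = Xg(Y,Z) - g(Y, \bar\nabla_X Z)$, combined with (3.5), produces $g(\nabla_X Y, Z) = -(Z\mu)g(X,Y)$ modulo $\eta$-corrections that cancel because $\eta(X) = \eta(Y) = 0$. Hence the $\mathcal{D}$-leaves are totally umbilical in $M$ with mean curvature vector $-\nabla^\bot\mu$, the $\mathcal{D}^\bot$-part of the gradient of $\mu$. The hypothesis $(Y\mu)=0$ for $Y \in \Gamma(\mathcal{D})$ is then exactly what promotes this umbilical foliation to a spherical one: differentiating $-\nabla^\bot\mu$ along a $\mathcal{D}$-direction $Y$ produces terms proportional to $Y\mu$, which vanish by hypothesis, so $\nabla_Y(\nabla^\bot\mu)$ has no $\mathcal{D}^\bot$-component. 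Hiepko's theorem then yields $M \cong N_\bot \times_{e^\mu} N_T$ locally.

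The hard part will be the converse direction, specifically the bookkeeping of the $\eta(\cdot)$ and $\alpha$-terms that arise from (2.4) and (2.11) when extracting $\mathcal{D}^\bot$-components of $\nabla_X Y$ and of $\nabla_Y(\nabla^\bot\mu)$. Many contributions vanish because $\xi \in \Gamma(\mathcal{D}^\bot)$ forces $\eta(X) = 0$ for $X \in \Gamma(\mathcal{D})$, but each cancellation has to be verified explicitly to isolate the clean umbilical formula and to check parallelism of the mean curvature vector, particularly when $Z$ has a nontrivial $\xi$-component.
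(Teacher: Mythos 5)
Your overall strategy coincides with the paper's: Lemma 3.1 supplies the $\mathcal{D}$-components of $A_{\phi Z}X$ in the forward direction, the vanishing of the $\mathcal{D}^\bot$-component then yields (3.5) with $\mu=\ln f$, and the converse runs through a totally geodesic foliation for $\mathcal{D}^\bot$, a spherical (totally umbilical with parallel mean curvature $-\nabla\mu$) foliation for $\mathcal{D}$, and Hiepko's theorem. Your converse is sound and matches the paper essentially step for step; in particular the cancellation of the $\alpha\eta(Z)$-terms in $g(h^T(X,Y),Z)=g(\phi X,A_{\phi Z}Y)+\alpha\eta(Z)g(X,\phi Y)$ happens exactly as you predict, and integrability of $\mathcal{D}$ comes for free from the symmetry in $X,Y$ of the resulting formula $-(Z\mu)g(X,Y)$.

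There is, however, one step in your forward direction that does not close as described. To show $g(h(X,W),\phi Z)=0$ for $W,Z\in\Gamma(\mathcal{D}^\bot)$ you differentiate along the fiber direction $X$ and push $\phi$ through $\bar\nabla_X(\phi Z)$. Carrying this out, the structure-tensor terms from (2.11) do drop out (since $\eta(X)=0$ and $g(X,Z)=g(X,W)=0$), but what remains is
\begin{equation*}
g(h(X,W),\phi Z)=g(\bar\nabla_XW,\phi Z)=-g(\phi W,\bar\nabla_XZ)=-g(h(X,Z),\phi W),
\end{equation*}
i.e.\ only the antisymmetry of $(W,Z)\mapsto g(h(X,W),\phi Z)$, which by itself does not force vanishing; the quantity you are left with is of exactly the same type, so the argument is circular. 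The paper instead differentiates along the base: $g(h(X,W),\phi Z)=g(\bar\nabla_WX,\phi Z)=g(\phi\bar\nabla_WX,Z)=g(\bar\nabla_W\phi X,Z)$, the $(\bar\nabla_W\phi)X$-term vanishing because $\eta(X)=0$ and $g(W,X)=0$, and then Proposition 2.1 gives $\nabla_W\phi X=(W\ln f)\phi X$, which is orthogonal to $Z$, so the expression is zero. Replacing your fiber-direction computation by this base-direction one repairs the gap; everything else in your proposal is correct.
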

%%%%%%%%%%%%%%%%%%%%%%%%%%%%%%%%%%%%%%%%%%%%%%%%%%%%%%%%%%%%%%%%%%%%%%%%%%%%%%%
\begin{proof}
\indent If $M$ be a contact CR-warped product submanifold, then for any $X\in \Gamma(TM_T)$ and $Z,\ W\in \Gamma (TM_\bot)$,
 we have
 \begin{equation*}
   g(A_{\phi Z}X,W)-g(h(X,W),\phi Z)=g(\bar{\nabla}_WX,\phi Z)=g(\phi\bar{\nabla}_WX,Z).
 \end{equation*}
 Using (\ref{2.11}) in the above equation we get
 \begin{equation}\label{3.5a}
   g(A_{\phi Z}X,W)=g(\bar{\nabla}_W\phi X,Z).
 \end{equation}
 Then from (\ref{2.13}) and Proposition 2.1 we have from (\ref{3.5a}) that $g(A_{\phi Z}X,W)=0$ and therefore $A_{\phi Z}X$ has no
 component in $\Gamma(TN_\bot)$. Hence by virtue of Lemma 3.1, the relation (\ref{3.5}) follows.\\
  \par Conversely, let $M$ be a contact CR-submanifold of $\bar{M}$ with the invariant and anti-invariant distributions
  $D$ and $\mathcal{D}^\bot$ such that the relation (\ref{3.5}) holds. Then for any $X\in \Gamma(\mathcal{D})$ and $Z,\ W\in \Gamma(\mathcal{D}^\bot)$, and
   using (\ref{2.13}) we have
  \begin{eqnarray*}
  % \nonumber % Remove numbering (before each equation)
  g(\nabla_ZW,\phi X)&=& g(\bar{\nabla}_Z\phi W,X)-g((\bar{\nabla}_Z\phi)W,X)\\
  &=&-g(\phi W,\bar{\nabla}_ZX)\\
  &=&-g(A_{\phi W}X,Z).
  \end{eqnarray*}
  Using (\ref{3.5}) in above relation we get $$g(\nabla_ZW,\phi X)=0.$$\\
  Similarly, we get $g(\nabla_WZ,\phi X)=0.$
  Thus we obtain
  \begin{equation}
  \label{3.6}
 g(\nabla_ZW+\nabla_WZ,\phi X)=0,
  \end{equation}
  Which implies that $\nabla_ZW+\nabla_WZ\in \Gamma(\mathcal{D}^\bot)$, i.e., $\mathcal{D}^\bot$ is integrable and its leaves are totally geodesic in $M$.
  Again for any $X,\ Y\in \Gamma(\mathcal{D})$ and $Z\in \Gamma(\mathcal{D}^\bot)$, we get
  \begin{eqnarray}
  \label{3.7}
  g(\nabla_XY,Z)&=&g(\bar{\nabla}_X\phi Y,\phi Z)+\eta(Z)g(Y,\bar{\nabla}_X\xi).
  \end{eqnarray}
  Using (\ref{2.6}) in (\ref{3.7}), we get
  \begin{equation}
  \label{3.8}
 g(\nabla_XY,Z)=g(h(X,\phi Y),\phi Z)+\alpha \eta(Z)g(Y,\phi X).\\
  \end{equation}
  Interchanging $X$ and $Y$ in (\ref{3.8}), we get
 \begin{equation}\label{3.8a}
 g(\nabla_YX,Z)=g(h(\phi X,Y),\phi Z)+\alpha \eta(Z)g(X,\phi Y).
 \end{equation}
From (\ref{3.8}) and (\ref{3.8a}), we have
  \begin{equation}
  \label{3.9}
 g([X,Y],Z)=g(h(X,\phi Y),\phi Z)-g(h(\phi X,Y),\phi Z).
  \end{equation}
  Using (\ref{3.5}) in (\ref{3.9}), we get $g([X,Y],Z)=0$ and therefore $\mathcal{D}$ is integrable on $M$.\\
  Let us consider a leaf $N_T$ of $\mathcal{D}$ in $M$ and let $h^T$ be the second fundamental form
  of $N_T$ in $M$, then we have
  \begin{eqnarray}
  \label{3.10}
  % \nonumber % Remove numbering (before each equation)
    g(h^T(X,Y),Z) &=& g(\phi\bar{\nabla}_YX,\phi Z)-\eta(Z)g(\bar{\nabla}_YX,\xi)\\
\nonumber&=&-g(\phi X,\bar{\nabla}_Y\phi Z)+\eta(Z)g(X,\bar{\nabla}_Y\xi).
  \end{eqnarray}
  Using (\ref{2.6}) and (\ref{2.14}), (\ref{3.10}) yields
  \begin{equation}\label{3.12}
  g(h^T(X,Y),Z)=g(\phi X,A_{\phi Z}Y)+\alpha \eta(Z)g(X,\phi Y).
  \end{equation}
  From (\ref{3.5}) and (\ref{3.12}), we obtain
  \begin{equation}\label{3.13}
  g(h^T(X,Y),Z)=-(Z\mu)g(X,Y).
  \end{equation}
  Using (\ref{2.19}) in (\ref{3.13}), we get
  \begin{eqnarray}
  % \nonumber % Remove numbering (before each equation)
    h^T(X,Y) &=& -(\nabla\mu)g(X,Y),
  \end{eqnarray}
  where $\nabla\mu$ is the gradient of the function $\mu$ and therefore $N_T$ is totally umbilical in $M$ with
  mean curvature $-\nabla\mu$. Moreover, the condition $(Y\mu)=0$, for any $Y\in \mathcal{D}$ implies that the leaves
  of $\mathcal{D}$ are extrinsic spheres in $M$, i.e., the integral manifold $N_T$ of $\mathcal{D}$ is umbilical and its mean curvature
  vector field is non zero and parallel along $N_T$. Hence by Hiepko's theorem $M$ is locally a warped product $N_\bot\times_fN_T$ ,
  where $N_T$ and $N_\bot$ denote the integral manifolds of the distributions $\mathcal{D}$ and $\mathcal{D}^\bot$ respectively and $f$ is the warping
  function. Thus the theorem is proved completely.
\end{proof}
%%%%%%%%%%%%%%%%%%%%%%%%%%%%%%%%%%%%%%%%%%%%%%%%%%
\section{characterization for warped product pseudo slant submanifolds}
Recently Hui et al. \cite{HAP} studied warped product pseudo-slant submanifolds  of $(LCS)_n$-manifolds. In this section we
 obtain a characterization for a submanifold $M$ of $\bar{M}$ to be a warped product pseudo-slant submanifold of the form
 $N_\theta\times_fN_\bot$, where $N_\theta$ is a slant submanifold tangent to $\xi$ and
 $N_\bot$ is an anti-invariant submanifolds of $\bar{M}$.
%%%%%%%%%%%%%%%%%%%%%%%%%%%%%%%%%%%%%%%%%%%%%%
\begin{lemma}
Let $M$ be a proper pseudo-slant submanifold of a $(LCS)_n$-manifold $\bar{M}$ with anti-invariant and proper slant dfistributions
$\mathcal{D}^\bot$ and $\mathcal{D}^\theta$, respectively such that $\xi\in\Gamma(\mathcal{D}^\theta)$. Then
\begin{equation}\label{4.1}
g(\nabla_XY,Z)=\sec^2\theta[g(h(X,PY),\phi Z)+g(h(X,Z),QPY)],
\end{equation}
for any $X,\ Y\in \Gamma(\mathcal{D}^\theta)$ and $Z\in \Gamma(D^\bot)$.
\end{lemma}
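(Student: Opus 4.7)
I will work backwards from the right-hand side and reduce everything to $g(\nabla_X Y,Z)$ by exploiting the decomposition $\phi(PY)=P^2Y+QPY$ (so that $QPY=\phi PY-P^2Y$) together with the slant identity $P^2Y=\cos^2\theta\,(Y+\eta(Y)\xi)$ from (\ref{2.17}). The starting point is the observation that $g(h(X,Z),QPY)=g(\bar\nabla_X Z,QPY)$, because $QPY$ is normal and the tangential part $\nabla_X Z$ contributes nothing. I then split this inner product according to $QPY=\phi PY-P^2Y$ and evaluate each summand separately.

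For $g(\bar\nabla_X Z,P^2Y)=\cos^2\theta\,g(\bar\nabla_X Z,Y+\eta(Y)\xi)$, I use that $\xi\in\Gamma(\mathcal{D}^\theta)\perp\mathcal{D}^\bot$, so $\eta(Z)\equiv 0$ on $M$ and $g(X,Z)=g(Y,Z)=0$; metric compatibility then gives $g(\bar\nabla_X Z,Y)=-g(\nabla_X Y,Z)$, while $\bar\nabla_X\xi=\alpha(X+\eta(X)\xi)$ from (\ref{2.4}), combined with these orthogonalities, gives $g(\bar\nabla_X Z,\xi)=0$. For $g(\bar\nabla_X Z,\phi PY)$, the symmetry (\ref{2.8}) lets me shift $\phi$ across the inner product to obtain $g(\phi\bar\nabla_X Z,PY)$, and the covariant derivative $(\bar\nabla_X\phi)Z$ of (\ref{2.11}) vanishes because each of its three summands contains one of the vanishing quantities $g(X,Z)$, $\eta(Z)$, or $\eta(X)\eta(Z)$. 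Hence $\phi\bar\nabla_X Z=\bar\nabla_X\phi Z=\bar\nabla_X QZ$ (using $PZ=0$ by anti-invariance), and Weingarten (\ref{2.14}) together with the shape-operator relation yields $g(\phi\bar\nabla_X Z,PY)=-g(A_{QZ}X,PY)=-g(h(X,PY),\phi Z)$.

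Assembling the two pieces gives
\[
g(h(X,Z),QPY)=-g(h(X,PY),\phi Z)+\cos^2\theta\,g(\nabla_X Y,Z),
\]
which rearranges to the stated formula after dividing through by $\cos^2\theta$. The only real subtlety is bookkeeping of the $\alpha$-dependent contributions coming from the $(LCS)_n$ structure equations (\ref{2.4}) and (\ref{2.11}); the hypothesis $\xi\in\Gamma(\mathcal{D}^\theta)$ is precisely what ensures that every such term disappears by orthogonality of $\mathcal{D}^\bot$ and $\mathcal{D}^\theta$, so the computation is essentially a clean two-step split with no surviving curvature-of-$\phi$ corrections.
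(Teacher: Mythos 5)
Your proof is correct. The paper runs the computation in the opposite direction and with the dual identity: it starts from $g(\nabla_XY,Z)=g(\phi\bar{\nabla}_XY,\phi Z)$, pushes $\phi$ through the derivative via (\ref{2.11}), splits $\phi Y=PY+QY$, and then applies $\phi$ once more to the $QY$-part using (\ref{2.21}) (namely $bQY=\sin^2\theta\,(Y+\eta(Y)\xi)$ and $cQY=-QPY$), so that a term $\sin^2\theta\,g(\bar{\nabla}_XY,Z)$ appears and is transposed to the left-hand side to produce the $\sec^2\theta$. You instead start from $g(h(X,Z),QPY)=g(\bar{\nabla}_XZ,QPY)$ and decompose $QPY=\phi PY-P^2Y$ with $P^2Y=\cos^2\theta\,(Y+\eta(Y)\xi)$ from (\ref{2.17}); since $P^2+bQ=\mathrm{id}+\eta\otimes\xi$ and $QP+cQ=0$ are the tangential and normal shadows of the single identity $\phi^2=\mathrm{id}+\eta\otimes\xi$, your decomposition and the paper's (\ref{2.21}) carry exactly the same information, and the two computations are algebraically equivalent. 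What your route buys is that the $\cos^2\theta$ factor appears directly rather than as $1-\sin^2\theta$, and that every $\alpha$-dependent term coming from (\ref{2.4}) and (\ref{2.11}) is visibly annihilated by $\eta(Z)=0$ and $g(X,Z)=0$ --- which you correctly justify from $\xi\in\Gamma(\mathcal{D}^\theta)$ and the orthogonality of the two distributions, whereas the paper suppresses this bookkeeping. The paper's forward version has the minor advantage of keeping the covariant derivative on the $\mathcal{D}^\theta$-side throughout, matching the pattern of its subsequent lemmas. Both arguments need $\theta\neq\frac{\pi}{2}$ to divide by $\cos^2\theta$, which the properness hypothesis supplies.
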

%%%%%%%%%%%%%%%%%%%%%%%%%%%%%%%%%%%%%%%%%%%%%%%%%%%%%%%%%%%
\begin{proof}
For any $X,\ Y\in\Gamma(\mathcal{D}^\theta)$ and $Z\in\Gamma(\mathcal{D}^\bot)$, we have
\begin{eqnarray*}
% \nonumber % Remove numbering (before each equation)
  g(\nabla_XY,Z) &=& g(\phi \bar{\nabla}_XY,\phi Z) \\
   &=&g(\bar{\nabla}_X\phi Y,\phi Z)-g((\bar{\nabla}_X\phi)Y,Z) \\
   &=&g(\bar{\nabla}XPY,\phi Z)+g(\bar{\nabla}_XQY,\phi Z) \\
   &=&g(h(X,PY),\phi Z)+g(\bar{\nabla}X\phi QY,Z)-g((\bar{\nabla}_X\phi)QY,\phi Z) \\
   &=&g(h(X,PY),\phi Z)+g(\bar{\nabla}_XbQY,Z)+g(\bar{\nabla}_XcQY,Z).
\end{eqnarray*}
Using (\ref{2.21}) in the above relation, we get
\begin{eqnarray*}
% \nonumber % Remove numbering (before each equation)
  g(\nabla_XY,Z) &=&g(h(X,PY),\phi Z)+\sin^2\theta g(\bar{\nabla}_XY,Z)-g(\bar{\nabla}_XQPY,Z).
\end{eqnarray*}
From which (\ref{4.1}) follows.
\end{proof}
\begin{corollary}
 Let $M$ be a proper pseudo-slant submanifold of a $(LCS)_n$-manifold $\bar{M}$ with anti-invariant and proper slant distributions
$\mathcal{D}^\bot$ and $\mathcal{D}^\theta$, respectively such that $\xi\in\Gamma(\mathcal{D}^\theta)$. Then the distribution $\mathcal{D}^\theta$
defines a totally geodesic foliation if and only if
\begin{equation*}
g(h(X,PY),\phi Z)+g(h(X,Z),QPY)=0
\end{equation*}
for every $X,\ Y\in \Gamma(\mathcal{D}^\theta)$ and $Z\in \Gamma(\mathcal{D}^\bot)$.
\end{corollary}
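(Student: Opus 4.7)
The plan is to read the corollary as an immediate consequence of Lemma 4.1, the only real content being the standard translation between ``totally geodesic foliation'' and a component-wise vanishing condition on the induced connection.

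First I would recall the orthogonal decomposition $TM=\mathcal{D}^\theta\oplus\mathcal{D}^\bot$. By definition, $\mathcal{D}^\theta$ defines a totally geodesic foliation if and only if $\nabla_XY\in\Gamma(\mathcal{D}^\theta)$ for every $X,Y\in\Gamma(\mathcal{D}^\theta)$, which in turn is equivalent to
\[
g(\nabla_XY,Z)=0\quad\text{for all } X,Y\in\Gamma(\mathcal{D}^\theta),\ Z\in\Gamma(\mathcal{D}^\bot).
\]
(Note that this single condition simultaneously forces involutivity, since $[X,Y]=\nabla_XY-\nabla_YX$, and the vanishing of the second fundamental form of the leaves in $M$.)

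Next I would invoke Lemma 4.1, which for $X,Y\in\Gamma(\mathcal{D}^\theta)$ and $Z\in\Gamma(\mathcal{D}^\bot)$ gives
\[
g(\nabla_XY,Z)=\sec^2\theta\bigl[g(h(X,PY),\phi Z)+g(h(X,Z),QPY)\bigr].
\]
Because $\mathcal{D}^\theta$ is a \emph{proper} slant distribution with slant angle $\theta\neq 0,\pi/2$, one has $\cos\theta\neq 0$, so the factor $\sec^2\theta$ is finite and nonzero. Dividing by $\sec^2\theta$ therefore shows that $g(\nabla_XY,Z)=0$ for all admissible $X,Y,Z$ if and only if
\[
g(h(X,PY),\phi Z)+g(h(X,Z),QPY)=0.
\]
Combining this equivalence with the foliation characterization above yields the corollary.

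The main conceptual point to check, rather than any calculational obstacle, is that the foliation-theoretic statement really reduces to $g(\nabla_XY,Z)=0$; once this is accepted, the corollary is nothing more than the fact that $\sec^2\theta$ is a nonvanishing scalar under the properness hypothesis, and no further computation beyond Lemma 4.1 is required.
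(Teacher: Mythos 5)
Your proposal is correct and is exactly the argument the paper intends: the corollary is stated as an immediate consequence of Lemma 4.1, obtained by noting that a totally geodesic foliation of $\mathcal{D}^\theta$ is characterized by $g(\nabla_XY,Z)=0$ and that $\sec^2\theta$ is a nonzero factor for a proper slant distribution. No further comment is needed.
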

%%%%%%%%%%%%%%%%%%%%%%%%%%%%%%%%%%%%%%%%%%%%%%%%%%%%%%%%%
\begin{lemma}
Let $M=N_\theta\times_fN_\bot$ be a warped product submanifold of a $(LCS)_n$-manifold $\bar{M}$, where $N_\bot$ and $N_\theta$ are ant-invariant
and proper slant submanifold of $\bar{M}$ such that $\xi\in \Gamma(TN_\theta)$. Then
\begin{eqnarray}
% \nonumber % Remove numbering (before each equation)
\label{4.2} g(h(X,Y),\phi Z)+g(h(X,Z),QY) &=&0, \\
\label{4.3} g(h(Z,W),QX)+g(h(X,Z),QW) &=& (\phi X\ln f)g(Z,W), \\
\label{4.4}\ \ \  g(h(Z,W),QPX)+g(h(PX,Z),QW) &=& \cos^2\theta[(X\ln f)+\alpha \eta(X)]g(Z,W).
\end{eqnarray}
\end{lemma}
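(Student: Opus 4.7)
The plan is to prove each of (\ref{4.2})--(\ref{4.4}) by shuffling $\phi$ across the Levi--Civita connection using the Gauss and Weingarten formulas (\ref{2.13})--(\ref{2.14}), the self-adjointness (\ref{2.8}), the structure-derivative identity (\ref{2.11}), and Proposition 2.1 for the warped product. Three ingredients will be used throughout: since $\xi\in\Gamma(TN_\theta)$ and the warped-product factors are orthogonal, $\eta(Z)=\eta(W)=0$ and $g(X,Z)=0$ for $X\in\Gamma(TN_\theta)$ and $Z,W\in\Gamma(TN_\bot)$; the anti-invariance of $N_\bot$ gives $\phi W=QW$ and $PW=0$; and matching $\bar\nabla_Z\xi=\alpha Z$ from (\ref{2.4}) against $\nabla_Z\xi=(\xi\ln f)Z$ from Proposition 2.1 forces $\xi\ln f=\alpha$ (tangential/normal comparison), which legitimises the compact notation $(\phi X\ln f)$ in (\ref{4.3}).

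For (\ref{4.2}), I would rewrite $g(h(X,Y),\phi Z)=g(\bar\nabla_X Y,\phi Z)=g(\phi\bar\nabla_X Y,Z)$ via self-adjointness, then expand $\phi\bar\nabla_X Y=\bar\nabla_X(\phi Y)-(\bar\nabla_X\phi)Y$. The $(\bar\nabla_X\phi)Y$-term dies against $Z$ because both $\eta(Z)=0$ and $g(X,Z)=0$. Splitting $\phi Y=PY+QY$, the $\bar\nabla_X PY$-piece contributes nothing ($\nabla_X PY$ stays in $\Gamma(TN_\theta)$ and so is orthogonal to $Z$, while $h(X,PY)$ is normal), whereas Weingarten applied to $\bar\nabla_X QY$ leaves precisely $-g(h(X,Z),QY)$, proving (\ref{4.2}).

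For (\ref{4.3}), I would begin with $g(h(Z,W),QX)=g(\bar\nabla_Z W,QX)$ and split $QX=\phi X-PX$. The $PX$-piece collapses by Gauss to $g(\nabla_Z W,PX)$; a Koszul computation on the warped product, equivalent to differentiating $g(Z,PX)=0$ along $W$ and invoking Proposition 2.1, yields $g(\nabla_Z W,PX)=-(PX\ln f)g(Z,W)$. The $\phi X$-piece, after self-adjointness and the expansion $\phi\bar\nabla_Z W=\bar\nabla_Z(\phi W)-(\bar\nabla_Z\phi)W$, produces a Weingarten contribution $-g(h(X,Z),QW)$ from $\bar\nabla_Z QW$, together with a structure-tensor contribution $\alpha\eta(X)g(Z,W)$ from (\ref{2.11}), since $(\bar\nabla_Z\phi)W=\alpha g(Z,W)\xi$ when $\eta(Z)=\eta(W)=0$. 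Collecting and repackaging the warped-product term $(PX\ln f)$ together with the $\alpha\eta(X)$ term via $\xi\ln f=\alpha$ into the compact expression $(\phi X\ln f)g(Z,W)$ delivers (\ref{4.3}). Finally, (\ref{4.4}) is obtained by substituting $X\mapsto PX$ in (\ref{4.3}): using (\ref{2.17}) one has $P^2X=\cos^2\theta[X+\eta(X)\xi]$ and $\eta(PX)=g(X,\phi\xi)=0$, so the right-hand side simplifies to $\cos^2\theta[(X\ln f)+\alpha\eta(X)]g(Z,W)$.

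The principal obstacle is the careful assembly in (\ref{4.3}): the $\alpha\eta(X)g(Z,W)$ contribution from the structure tensor $(\bar\nabla_Z\phi)W$ must be reconciled with the warped-product term $(PX\ln f)g(Z,W)$ so that the result collapses into the single compact expression $(\phi X\ln f)g(Z,W)$, and this only closes properly after one commits to the ambient identification $\xi\ln f=\alpha$. The remaining manipulations are routine bookkeeping following the same Gauss--Weingarten pattern already used in Lemma 4.1.
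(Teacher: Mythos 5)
Your overall strategy coincides with the paper's: you prove (4.2) by moving $\phi$ across $\bar{\nabla}_XY$, killing the $(\bar{\nabla}_X\phi)Y$ term via $\eta(Z)=g(X,Z)=0$, and splitting $\phi Y=PY+QY$; you prove (4.3) from the decomposition $QX=\phi X-PX$ together with Gauss--Weingarten and Proposition 2.1; and you obtain (4.4) by substituting $X\mapsto PX$ in (4.3) and invoking (2.17) with $\eta(PX)=0$. Your preliminary observation $\xi\ln f=\alpha$ (from comparing the tangential part of $\bar{\nabla}_Z\xi=\alpha Z$ with $\nabla_Z\xi=(\xi\ln f)Z$) is correct and is exactly what the paper uses, implicitly, to produce the $\alpha\eta(X)$ appearing in (4.4).

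The one step that does not close is your final assembly of (4.3). You correctly retain the contribution of $(\bar{\nabla}_Z\phi)W=\alpha g(Z,W)\xi$, which enters the computation as $-g((\bar{\nabla}_Z\phi)W,X)=-\alpha\,\eta(X)g(Z,W)$ (note your stated sign is already off), and you then assert that this term can be ``repackaged'' together with $(PX\ln f)$ into $(\phi X\ln f)$. It cannot: since the gradient of $\ln f$ is tangent to $M$ (equivalently, since $\phi X=X+\eta(X)\xi$ is tangent and $\xi\ln f=\alpha$), one already has $(PX\ln f)=(X\ln f)+\alpha\eta(X)=(\phi X\ln f)$, leaving no room to absorb a further $\pm\alpha\eta(X)$. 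Carrying the structure term through honestly yields $\bigl[(PX\ln f)-\alpha\eta(X)\bigr]g(Z,W)=(X\ln f)g(Z,W)$, not the stated $(\phi X\ln f)g(Z,W)$. The paper's own displayed chain reaches (4.3) only because the term $-g((\bar{\nabla}_Z\phi)W,X)$ is silently dropped between its second and third lines. So you must either show that this term vanishes (it does not in general --- take $X$ with a nonzero $\xi$-component) or accept that the identity you actually prove differs from (4.3) by $\alpha\eta(X)g(Z,W)$. Fortunately this does not infect (4.4): after the substitution $X\mapsto PX$ the offending term is proportional to $\eta(PX)=g(X,\phi\xi)=0$, so (4.4) comes out the same on either reading, as does (4.2).
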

\begin{proof}
For any $X,\ Y\in \Gamma(TN_\theta)$ and $Z\in \Gamma(TN_\bot)$, we have
\begin{eqnarray*}
% \nonumber % Remove numbering (before each equation)
g(h(X,Y),\phi Z) &=& g(\bar{\nabla}_XY,\phi Z) \\
   &=&g(\phi \bar{\nabla}_XY,Z) \\
   &=&g(\bar{\nabla}_X\phi Y,Z) \\
   &=&g(\bar{\nabla}_XPY,Z)+g(\bar{\nabla}_XQY,Z).
\end{eqnarray*}
Using Proposition 2.1 in the above relation, we get
\begin{equation}\label{4.5}
g(h(X,Y),\phi Z)=(X\ln f)g(Z,PY)-g(h(X,Z),QY).
\end{equation}
Thus (\ref{4.2}) follows from (\ref{4.5}).
Also, for any $X\in\Gamma(TN_\theta)$ and $Z,\ W\in \Gamma(TN_\bot)$, we have
\begin{eqnarray*}
% \nonumber % Remove numbering (before each equation)
g(h(Z,W),QX)&=&g(\bar{\nabla}_ZW,\phi X)-g(\bar{\nabla}_ZW,PX) \\
   &=&g(\bar{\nabla}_Z\phi W,X)-g((\bar{\nabla}_Z\phi)W,X)-g(\bar{\nabla}_ZW,PX) \\
   &=&-g(h(X,Z),\phi W)+g(W,\bar{\nabla}_ZPX).
\end{eqnarray*}
Using Proposition 2.1 in the above relation we get (\ref{4.3}).
Interchanging $X$ by $PX$ in (\ref{4.3}) we get (\ref{4.4})
\end{proof}
%%%%%%%%%%%%%%%%%%%%%%%%%%%%%%%%%%%%%%%%%%%%
Now, we prove the following characterization theorem for warped product pseudo-slant submanifolds.
\begin{theorem}
Let $M$ be a proper pseudo-slant submanifold of a $(LCS)_n$-manifold $\bar{M}$ with anti-invariant distribution $\mathcal{D}^\bot$
and proper pseudo-slant distribution $\mathcal{D^\theta}$, respectively such that $\xi\in \Gamma(\mathcal{D}^\theta)$. Then $M$ is locally
a mixed-geodesic warped product submanifold of the form $N_\theta\times_fN_\bot$ if and only if
\begin{equation}\label{4.6}
A_{\phi Z}X=0\ \ \text{and}\ \  A_{QPX}Z=\cos^2\theta[(X\mu)+\alpha \eta(X)]Z,
\end{equation}
for any $X\in \Gamma(\mathcal{D^\theta}),\ Z\in \Gamma(\mathcal{D^\bot})$ and for some function $\mu$ on $M$ satisfying $(Z\mu)=0$, for
any $Z\in \Gamma(\mathcal{D}^\bot)$.
\end{theorem}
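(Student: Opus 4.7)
The plan is to prove both directions by Hiepko's Theorem after verifying that $\mathcal{D}^\theta$ foliates $M$ by totally geodesic leaves and $\mathcal{D}^\bot$ by extrinsic spheres, closely mirroring the structure of Theorem 3.2.

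For the necessity direction, assume $M = N_\theta \times_f N_\bot$ is mixed-geodesic, so $h(X, Z) = 0$ for $X \in \mathcal{D}^\theta$ and $Z \in \mathcal{D}^\bot$. Substituting this into Lemma 4.2, equation (4.2) reduces to $g(A_{\phi Z}X, Y) = 0$ for $Y \in \mathcal{D}^\theta$; combining with $g(A_{\phi Z}X, W) = g(h(X, W), \phi Z) = 0$ for $W \in \mathcal{D}^\bot$, we obtain $A_{\phi Z}X = 0$. Similarly equation (4.4) gives the claimed formula for $A_{QPX}Z$ on its $\mathcal{D}^\bot$-component, while its $\mathcal{D}^\theta$-component $g(A_{QPX}Z, Y) = g(h(Z, Y), QPX)$ vanishes by mixed-geodesicity. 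Taking $\mu = \ln f$, the property $(Z\mu) = 0$ is immediate since $f$ lives on $N_\theta$.

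For sufficiency, Lemma 4.1 shows $\mathcal{D}^\theta$ is a totally geodesic foliation: the two summands $g(h(X, PY), \phi Z) = g(A_{\phi Z}X, PY)$ and $g(h(X, Z), QPY) = g(A_{QPY}Z, X)$ both vanish by the two conditions (the second because $A_{QPY}Z$ is a scalar multiple of $Z \in \mathcal{D}^\bot$ while $X \in \mathcal{D}^\theta$). The core computation is $g(\nabla_Z W, X)$ for $Z, W \in \mathcal{D}^\bot$ and $X \in \mathcal{D}^\theta$. Decomposing $X = \sec^2\theta\, P^2 X - \eta(X)\xi$ via (2.17), the $\xi$-contribution gives $g(\nabla_Z W, \xi) = -\alpha g(Z, W)$ from $\bar\nabla_Z \xi = \alpha Z$, and the $P^2 X$ contribution is handled by the identity $\phi\bar\nabla_Z W = \bar\nabla_Z QW - \alpha g(Z, W)\xi$ (from (2.11) with $\eta(Z) = \eta(W) = 0$) together with the first condition (to kill $g(h(Z, PX), QW) = g(A_{\phi W}PX, Z)$) and the second condition (to evaluate $g(h(Z, W), QPX)$). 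Specializing the second condition at $X = \xi$ forces the identity $\xi\mu = \alpha$, which cancels the two residual $\eta(X)$-contributions and yields
\begin{equation*}
g(\nabla_Z W, X) = -(X\mu)\, g(Z, W).
\end{equation*}

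From this expression, symmetry in $Z, W$ gives integrability of $\mathcal{D}^\bot$, and the hypothesis $(Z\mu) = 0$ places $\nabla\mu$ inside $\mathcal{D}^\theta$, so each leaf of $\mathcal{D}^\bot$ is totally umbilical in $M$ with mean curvature vector $H = -\nabla\mu$. To lift this to sphericity I derive $g(\nabla_Y Z, X) = 0$ for $X, Y \in \mathcal{D}^\theta$ and $Z \in \mathcal{D}^\bot$ from the totally-geodesic property of $\mathcal{D}^\theta$ together with $\nabla_Y X = \nabla_X Y + [Y, X]$; the symmetry of the Hessian of $\mu$ then gives $g(\nabla_Z \nabla\mu, Y) = 0$, i.e., $\nabla^\perp_Z H = 0$. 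Hiepko's Theorem now produces the local warped product $M \cong N_\theta \times_f N_\bot$ with $f = e^\mu$, and mixed-geodesicity is obtained by rerunning the necessity-direction shape-operator computation inside this warped product structure. The main obstacle is the $g(\nabla_Z W, X)$ calculation: one has to juggle the two $\eta(X)$-contributions---one from $(\bar\nabla_Z \phi)W = \alpha g(Z, W)\xi$ and one from the $\xi$-part of the decomposition of $X$---and recognize that they cancel only through the identity $\xi\mu = \alpha$ forced by the second hypothesis at $X = \xi$, without which an extra umbilical term would ruin the spherical foliation argument.
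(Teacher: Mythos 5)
Your proposal is correct and follows the same overall strategy as the paper (necessity via Lemma 4.2 plus mixed-geodesicity; sufficiency via Lemma 4.1 for the totally geodesic foliation of $\mathcal{D}^\theta$, then an explicit computation of $g(\nabla_ZW,X)$ to get integrability, umbilicity and parallel mean curvature of the $\mathcal{D}^\bot$-leaves, then Hiepko). The one place you genuinely diverge is the key computation: the paper manipulates $g(\phi\bar\nabla_ZW,\phi X)$ directly and silently drops the $\eta$-terms coming from $g(\phi U,\phi V)=g(U,V)+\eta(U)\eta(V)$ and from $bQX=\sin^2\theta(X+\eta(X)\xi)$, arriving at $g(h^\bot(Z,W),X)=-[(X\mu)+\alpha\eta(X)]g(Z,W)$ and mean curvature $-(\overrightarrow{\nabla}^\bot\mu+\alpha\xi)$; your decomposition $X=\sec^2\theta\,P^2X-\eta(X)\xi$ keeps track of these contributions, they cancel, and you get $g(\nabla_ZW,X)=-(X\mu)g(Z,W)$ with mean curvature $-\nabla\mu$. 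Your version is the consistent one: evaluating directly at $X=\xi$ gives $g(\nabla_ZW,\xi)=-\alpha g(Z,W)$, which matches $-(\xi\mu)g(Z,W)$ via the identity $\xi\mu=\alpha$ that you correctly extract from the second condition at $X=\xi$, whereas the paper's formula would give $0$ there. So you have in effect repaired a computational slip; the final conclusion (nonzero parallel mean curvature, hence extrinsic spheres and Hiepko) survives either way. Two caveats, neither of which puts you behind the paper: your parenthetical that the two $\eta(X)$-contributions ``cancel only through $\xi\mu=\alpha$'' overstates matters --- they cancel identically, and $\xi\mu=\alpha$ is only a consistency check; and your closing claim that mixed-geodesicity of the resulting warped product is obtained by ``rerunning the necessity computation'' is not justified as stated, since (\ref{4.6}) controls only the $\phi\mathcal{D}^\bot$- and $Q\mathcal{D}^\theta$-components of $h(X,Z)$ and says nothing about its $\nu$-component --- but the paper's own converse omits this point entirely, so the gap is inherited from the source rather than introduced by you.
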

\begin{proof}
  Let $M=N_\theta\times_fN_\bot$ be a mixed geodesic warped product submanifold of a $(LCS)_n$-manifold $\bar{M}$ such that (\ref{4.6}) holds.
  Then for any $X,\ Y\in \Gamma(\mathcal{D}^\theta)$ and $Z\in\Gamma(\mathcal{D}^\bot)$, from (\ref{4.2}) and (\ref{4.4}), we get (\ref{4.6}).\\
  Conversely, Let $M$ is a proper pseudo-slant submanifold of a $(LCS)_n$-manifold $\bar{M}$ such that (\ref{4.6}) holds.\\
  Then for any for any $X,\ Y\in \Gamma(\mathcal{D}^\theta)$ and for any $Z\in \Gamma(\mathcal{D}^\bot)$, from (\ref{4.1}) and (\ref{4.6}), we get
  $g(\nabla_XY,Z)=0$ and hence the leaves of $\mathcal{D}^\theta$ are totally geodesic in $M$.\\ Also, for any $X\in\Gamma(\mathcal{D})$ and
  $Z,\ W\in\Gamma(\mathcal{D}^\bot)$, we have
  \begin{eqnarray*}
  % \nonumber % Remove numbering (before each equation)
  g([Z,W],X) &=& g(\bar{\nabla}_ZW,X)-g(\bar{\nabla}_WZ,X) \\
      &=& g(\phi\bar{\nabla}_ZW,\phi X)-g(\phi\bar{\nabla}_WZ,\phi X) \\
      &=& g(\bar{\nabla}_Z\phi W,\phi X)-g(\bar{\nabla}W\phi Z,\phi X) \\
      &=& -g(\phi W,\bar{\nabla}_Z\phi X)+g(\phi Z,\bar{\nabla}_W\phi X) \\
      &=& -g(\phi W,\bar{\nabla}_ZPX)-g(\phi W,\bar{\nabla}_ZQX)+g(\phi Z,\bar{\nabla}_WPX)+g(\phi Z,\bar{\nabla}_WQX) \\
      &=& -g(\phi W,h(Z,PX))-g(W,\bar{\nabla}_ZbQX)-g(W,\bar{\nabla}_ZcQX)\\
      &&+g(\phi Z,h(W,PX))+g(Z,\bar{\nabla}_WbQX)+g(Z,\bar{\nabla}_WcQX).
  \end{eqnarray*}
  Using (\ref{2.21}) in the above relation, we get
  \begin{eqnarray}
  % \nonumber % Remove numbering (before each equation)
  \label{4.7}g([Z,W],X) &=& -g(A_{\phi W}PX,Z)+g(A_{\phi Z}PX,W)+\sin^2\theta g([Z,W],X) \\
  \nonumber&&+g(A_{QPX}Z,W)-g(A_{QPX}W,Z).
  \end{eqnarray}
  Using (\ref{4.6}) in (\ref{4.7}), we get
  \begin{equation}\label{4.8}
  \cos^2\theta g([Z,W],X)=0.
  \end{equation}
  Since $\mathcal{D}^\theta$ is proper pseudo-slant so, $\theta\neq 0,\frac{\pi}{2}$. Therefore, $g([Z,W],X)=0$ and hence the anti-invariant distribution
  $\mathcal{D}^\bot$ is integrable.\\
  Now, let $h^\bot$ be the second fundamental form of a leaf $N_\bot$ of $\mathcal{D}^\bot$ in $M$. Then for any $Z,\ W\in \Gamma(\mathcal{D}^\bot)$ and $X\in \Gamma(\mathcal{D}^\theta)$, we have
  \begin{eqnarray*}
  % \nonumber % Remove numbering (before each equation)
    g(h^\bot(Z,W),X) &=& g(\phi \bar{\nabla}_ZW,\phi X) \\
     &=& g(\bar{\nabla}_Z\phi W,\phi X) \\
     &=& g(\bar{\nabla}_Z\phi W,PX)+g(\bar{\nabla}_Z\phi W,QX) \\
     &=& -g(A_{\phi W}Z,PX)-g(W,\bar{\nabla}_Z\phi QX) \\
     &=& -g(A_{\phi W}PX,Z)-g(W,\bar{\nabla}_ZbQX)-g(W,\bar{\nabla}_ZcQX) \\
     &=& -g(A_{\phi W}PX,Z)+\sin^2\theta g(\bar{\nabla}_ZW,X)-g(A_{QPX}W,Z).
  \end{eqnarray*}
  Therefore
  \begin{equation}\label{4.9}
  \cos^2\theta g(h^\bot(Z,W),X)=-g(A_{\phi W}PX,Z)-g(A_{QPX}W,Z).
  \end{equation}
  Using (\ref{4.6}) in (\ref{4.9}), we get
  \begin{equation}\label{4.10}
  \cos^2\theta g(h^\bot(Z,W),X)=-\cos^2\theta[(X\mu)+\alpha \eta(X)]g(Z,W).
  \end{equation}
  Thus, we get
  \begin{equation*}
  h^\bot(Z,W)=-[\overrightarrow{\nabla}^\bot\mu+\alpha\xi]g(Z,W),
  \end{equation*}
  where $\overrightarrow{\nabla}^\bot\mu$ is gradient of the function $\mu$.\\ Therefore $N_\bot$ is totally umbilical in $M$ with
  the mean curvature $H^\bot=-(\overrightarrow{\nabla}^\bot\mu+\alpha\xi)$.\\ Now, let $\mathcal{D}^N$ be the normal connection of $N_\bot$ in $M$. Then
  for any $Y\in \Gamma(\mathcal{D}^\theta)$ and $Z\in \Gamma(\mathcal{D}^\bot)$, we have
  \begin{equation*}
  g(\mathcal{D}_Z^N\overrightarrow{\nabla}^\bot\mu+\alpha\xi,Y)=g(\nabla_Z\overrightarrow{\nabla}^\bot\mu,Y)+\alpha g(\nabla_Z\xi,Y).
  \end{equation*}
  Also, from (\ref{2.6}) and (\ref{2.13}) we get $\nabla_Z\xi=0$.\\
  Therefore, $g(\mathcal{D}_Z^N\overrightarrow{\nabla}^\bot\mu+\alpha\xi,Y)=g(\nabla_Z\overrightarrow{\nabla}^\bot\mu,Y)=0$, since $(Z\mu)=0$ for
  every $Z\in \Gamma(\mathcal{D}^\bot)$ and hence the mean curvature of $N_\bot$ is parallel.\\ Thus the leaves of the distribution $\mathcal{D}^\bot$
  are totally umbilical in $M$ with non-vanishing parallel mean curvature vector $H^\bot$, i.e. $N_\bot$ is an extrinsic sphere in $M$. Therefore by
  Theorem 2.1, $M$ is a warped product submanifold.
\end{proof}
%%%%%%%%%%%%%%%%%%%%%%%%%%%%%%%%%%%%%%%%%%%%%%%%%%%%%%
\section{warped product bi-slant submanifolds of $(LCS)_n$-manifolds}
\begin{definition}
  A submanifold $M$ of a $(LCS)_n$-manifold $\bar{M}$ is said to be a bi-slant submanifold if there exists a pair of orthogonal distributions
  $\mathcal{D}_1$ and $\mathcal{D}_2$ of $M$ such that\\
   $(\textit{i}) TM = \mathcal{D}_1\oplus\mathcal{D}_2 $\\
   $(\textit{ii})\phi \mathcal{D}_1\bot \mathcal{D}_2 \ \ \text{and} \ \   \phi\mathcal{D}_2\bot \mathcal{D}_1$\\
   $(\textit{iii})\mathcal{D}_1, \mathcal{D}_2 $ are slant submanifolds with slant angles $\theta_1$ and $\theta_2$, respectively.
\end{definition}
If we assume $\theta_1=0$ and $\theta_2=\frac{\pi}{2}$, then $M$ is a CR-submanifold and if $\theta_1=0$ and $\theta_2\neq0,\frac{\pi}{2}$, then
$M$ is a semi-slant submanifold. Also, if $\theta_1=\frac{\pi}{2}$ and $\theta_2\neq0,\frac{\pi}{2}$, then $M$ is a pseudo-slant submanifold.
A bi-slant submanifold $M$ of a $(LCS)_n$-manifold $\bar{M}$ is said to be proper if the slant distributions $\mathcal{D}_1$ and
$\mathcal{D}_2$ are of slant angles $\theta_1,\ \theta_2\neq0,\frac{\pi}{2}$.\\
For a proper bi-slant submanifold $M$ of a $(LCS)_n$-manifold, the normal bundle of $M$ is decomposed as $$T^\bot M=Q\mathcal{D}_1\oplus Q\mathcal{D}_2\oplus\nu,$$
where $\nu$ is the invariant normal subbundle of $M$.\\
%%%%%%%%%%%%%%%%%%%%%%%%%%%%%%%%%%%%%%%%%%%%%%%%%%%%%%%%%%%%%%
Now we will construct a bi-slant submanifold of a $(LCS)_n$-manifold.
%%%%%%%%%%%%%%%%%%%%%%%%%%%%%%%%%%%%%%%%%%%%%%
\begin{example}
 \rm{Consider the semi-Euclidean space ${\mathbb{R}}^{7}$ with the cartesian coordinates $(x_1,y_1\cdots,x_3, y_3,\,t)$ and paracontact structure
\begin{equation*}
\phi\left(\frac{\partial}{\partial x_i}\right)=\frac{\partial}{\partial y_i},\,\,\,\,
\phi\left(\frac{\partial}{\partial y_j}\right)=\frac{\partial}{\partial x_j},\,\,\phi\left(\frac{\partial}{\partial t}\right)=0,\,\,\,\,1\leq i, j\leq3.
\end{equation*}
It is clear that ${\mathbb{R}}^{7}$ is a Lorentzian metric manifold manifold with usual semi-Euclidean metric tensor.
For any $\theta_1, \theta_2\in[0,\frac{\pi}{2}]$ let $M$ be a submanifold of ${\mathbb{R}}^{7}$ defined by
\begin{equation*}
\chi(u, v, w, s,\,t)=(w+u\cos \theta_1,\,u\sin \theta_1,\,s+v\cos \theta_2,\,v\sin \theta_2,\,0,\,0,\,t).
\end{equation*}
 Then the tangent space of $M$ is spanned by the following vectors
\begin{eqnarray*}
% \nonumber % Remove numbering (before each equation)
  Z_1 &=& \cos \theta_1\frac{\partial}{\partial x_1}+\sin \theta_1\frac{\partial}{\partial x_2}, \\
  Z_2 &=& \cos \theta_2\frac{\partial}{\partial y_1}+\sin \theta_2\frac{\partial}{\partial y_2}, \\
  Z_3 &=& \frac{\partial}{\partial x_1}, \\
  Z_4 &=& \frac{\partial}{\partial y_1},\\
  Z_5 &=&\frac{\partial}{\partial t}.
\end{eqnarray*}
Then we have
\begin{eqnarray*}
% \nonumber % Remove numbering (before each equation)
  \phi Z_1 &=& \cos \theta_1\frac{\partial}{\partial y_1}+\sin \theta_1\frac{\partial}{\partial y_2}, \\
  \phi Z_2 &=& \cos \theta_2\frac{\partial}{\partial x_1}+\sin \theta_2\frac{\partial}{\partial x_2}, \\
  \phi Z_3 &=& \frac{\partial}{\partial y_1}, \\
  \phi Z_4 &=&\frac{\partial}{\partial x_1},\\
  \phi Z_5 &=& 0.
\end{eqnarray*}
We take ${\mathcal{D}_1}={\rm{Span}}\{Z_1,\,Z_4\}$ and ${\mathcal{D}_2}={\rm{Span}}\{Z_2,\ Z_3\}$, then $g(Z_1,\phi Z_4)=\cos \theta_1$
 and $g(Z_2,\phi Z_3)=\cos\theta_2$. Thus the distributions $\mathcal{D}_1$ and $\mathcal{D}_2$ are slant with slant angles $\theta_1$ and
 $\theta_2$ respectively and hence $M$ is a bi-slant submanifold.
}
\end{example}
%%%%%%%%%%%%%%%%%%%%%%%%%%%%%%%%%%%%%%%%%%%%%%%%%%%%%
\begin{lemma}
Let $M$ be a proper bi-slant submanifold of a $(LCS)_n$-manifold $\bar{M}$ with the slant distributions $\mathcal{D}_1$ and $\mathcal{D}_2$
 such that $\xi\in \Gamma(\mathcal{D}_1)$. Then
 \begin{eqnarray}
 % \nonumber % Remove numbering (before each equation)
 \label{5.1}\cos^2\theta_2g(\nabla_{X_1}X_2,Y_2) &=& g(\nabla_{X_1}PX_2,PY_2)+g(h(X_1,PX_2),QY_2)\\
\nonumber&& +g(h(X_1,Y_2),QPX_2),
 \end{eqnarray}
 for any $X_1\in\Gamma(\mathcal{D}_1)$ and $X_2,\ Y_2\in \Gamma(\mathcal{D}_2)$, where $\theta_1$ and $\theta_2$ are the slant angles
 of slant distributions $\mathcal{D}_1$ and $\mathcal{D}_2$ respectively.
\end{lemma}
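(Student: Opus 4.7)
The plan is to start from the LHS and rewrite $\cos^{2}\theta_{2}\,X_{2}$ as $P^{2}X_{2}$ via the slant identity (\ref{2.17}), then peel off one $P$ at a time using the structure equation (\ref{2.11}) together with the Gauss and Weingarten formulae.

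First I note the vanishing of all the $\eta$-terms that will appear. Because $\xi\in\Gamma(\mathcal{D}_1)$ and $\mathcal{D}_1\perp\mathcal{D}_2$, we have $\eta(X_2)=\eta(Y_2)=0$. Moreover the bi-slant hypothesis $\phi\mathcal{D}_2\perp\mathcal{D}_1$ forces the tangential component $PX_2$ to lie in $\mathcal{D}_2$, so $\eta(PX_2)=0$ as well. Applying (\ref{2.17}) inside $\mathcal{D}_2$ yields $P^{2}X_2=\cos^{2}\theta_2\,X_2$, and since the slant angle is a constant it commutes with $\nabla_{X_1}$, giving
\begin{equation*}
\cos^{2}\theta_{2}\,g(\nabla_{X_1}X_2,Y_2)=g(\bar{\nabla}_{X_1}P^{2}X_2,Y_2).
\end{equation*}

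Next, I would decompose $P^{2}X_2=\phi PX_2-QPX_2$ by applying (\ref{2.16})(a) to $PX_2$, which splits the right hand side into two pieces. For the $\phi PX_2$ piece I commute $\phi$ through $\bar{\nabla}$ by (\ref{2.11}); the stray term $(\bar{\nabla}_{X_1}\phi)PX_2$ is a multiple of $\xi$ because $\eta(PX_2)=0$, and therefore drops out when paired with $Y_2$. Using (\ref{2.8}) to transfer $\phi$ onto $Y_2$, then decomposing $\phi Y_2=PY_2+QY_2$ and applying the Gauss formula (\ref{2.13}) separately to the tangential and normal parts, one recovers exactly $g(\nabla_{X_1}PX_2,PY_2)+g(h(X_1,PX_2),QY_2)$.

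For the remaining $-QPX_2$ piece I use that $QPX_2$ is normal: the Weingarten formula (\ref{2.14}) gives $g(\bar{\nabla}_{X_1}QPX_2,Y_2)=-g(A_{QPX_2}X_1,Y_2)=-g(h(X_1,Y_2),QPX_2)$, and the overall minus sign in front flips this to $+g(h(X_1,Y_2),QPX_2)$. Summing the two pieces produces (\ref{5.1}).

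The only real hurdle is bookkeeping: making sure that every $\eta$-contribution emerging from (\ref{2.11}) vanishes (which it does by $\mathcal{D}_1\perp\mathcal{D}_2$), that the constancy of $\theta_2$ is used to commute $\cos^{2}\theta_2$ through $\nabla_{X_1}$, and that the tangential/normal split is done on the correct factor at each stage. No ingredient beyond (\ref{2.8}), (\ref{2.11}), (\ref{2.13}), (\ref{2.14}), (\ref{2.16}) and (\ref{2.17}) is needed.
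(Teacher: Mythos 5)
Your proof is correct, and it reaches (\ref{5.1}) by a route that is recognizably different in organization from the paper's, though built from the same toolkit. The paper starts from $g(\nabla_{X_1}X_2,Y_2)=g(\phi\bar{\nabla}_{X_1}X_2,\phi Y_2)$, pushes $\phi$ inside the derivative, splits $\phi X_2=PX_2+QX_2$, and then handles the $QX_2$ piece by writing $\phi QX_2=bQX_2+cQX_2$ and invoking (\ref{2.21}); the identity $bQX_2=\sin^2\theta_2\,X_2$ produces a term $\sin^2\theta_2\,g(\bar{\nabla}_{X_1}X_2,Y_2)$ which is transposed to the left to create the $\cos^2\theta_2$ factor. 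You instead inject the slant angle at the very start via $P^2X_2=\cos^2\theta_2\,X_2$ (the $\eta$-term in (\ref{2.17}) vanishing since $\xi\in\Gamma(\mathcal{D}_1)\perp\mathcal{D}_2$), decompose $P^2X_2=\phi PX_2-QPX_2$, and treat the two pieces with (\ref{2.11}), (\ref{2.8}), (\ref{2.13}) and (\ref{2.14}). Since (\ref{2.21}) is itself a consequence of (\ref{2.16}) and (\ref{2.17}), the two arguments are essentially dual: yours avoids the ``move the $\sin^2\theta_2$ term across'' step at the cost of the preliminary observation that $PX_2\in\Gamma(\mathcal{D}_2)$ (needed for $\eta(PX_2)=0$ and for killing the $(\bar{\nabla}_{X_1}\phi)PX_2$ term), which you correctly justify from the bi-slant condition $\phi\mathcal{D}_2\perp\mathcal{D}_1$. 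Your bookkeeping of the $\eta$-contributions and of the tangential/normal splittings is accurate throughout, so no gap remains.
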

\begin{proof}
For any $X_1\in\Gamma(\mathcal{D}_1)$ and $X_2,\ Y_2\in \Gamma(\mathcal{D}_2)$, we have
\begin{eqnarray*}
% \nonumber % Remove numbering (before each equation)
  g(\nabla_{X_1}X_2,Y_2) &=& g(\phi \bar{\nabla}_{X_1}X_2,\phi Y_2) \\
   &=& g(\bar{\nabla}_{X_1}\phi X_2,\phi Y_2) \\
   &=& g(\bar{\nabla}_{X_1}PX_2,PY_2)+g(\bar{\nabla}_{X_1}PX_2,QY_2)+g(\bar{\nabla}-{X_1}QX_2,\phi Y_2). \\
   &=& g(\nabla_{X_1}PX_2,PY_2)+g(h(X_1,PX_2),QY_2)\\
   &&+g(\bar{\nabla}_{X_1}bQX_2,Y_2)+g(\bar{\nabla}_{X_1}cQX_2,Y_2).
\end{eqnarray*}
Using (\ref{2.21}) in the above relation we get (\ref{5.1}).
\end{proof}
%%%%%%%%%%%%%%%%%%%%%%%%%%%%%%%%%%%%%%%%%%%%%%%%%%%%%%%%%%
\begin{theorem}
There does not exists a proper warped product bi-slant submanifold $M =M_{1}\times_fM_2$ of $\bar{M}$ such that $\xi\in\Gamma(TM_1)$.
\end{theorem}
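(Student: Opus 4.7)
The plan is to suppose for contradiction that $M=M_1\times_f M_2$ is a proper warped product bi-slant submanifold of $\bar M$ with $\xi\in\Gamma(TM_1)$ and slant angles $\theta_1,\theta_2\in(0,\pi/2)$, and to force $\theta_2=0$. First I apply Lemma~5.1 to $X_1\in\Gamma(TM_1)$ and $X_2,Y_2\in\Gamma(TM_2)$. The bi-slant assumption $\phi\mathcal{D}_1\perp\mathcal{D}_2$ implies $PX_2\in\Gamma(TM_2)$, so Proposition~2.1 yields $\nabla_{X_1}X_2=(X_1\ln f)X_2$ and $\nabla_{X_1}PX_2=(X_1\ln f)PX_2$. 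Using (\ref{2.19}) together with $\eta(X_2)=0$, the LHS of Lemma~5.1 cancels its first RHS term, leaving
\[
g(h(X_1,PX_2),QY_2)+g(h(X_1,Y_2),QPX_2)=0.
\]
Replacing $X_2$ by $PX_2$, using $P^2X_2=\cos^2\theta_2\,X_2$, and dividing by $\cos^2\theta_2\neq0$ gives the antisymmetry
\begin{equation}\label{bisl-B}
g(h(X_1,X_2),QY_2)+g(h(X_1,Y_2),QX_2)=0.
\end{equation}

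Next I compute $g(h(Z,W),QX_1)$ directly for $Z,W\in\Gamma(TM_2)$ and $X_1\in\Gamma(TM_1)$. Writing $QX_1=\phi X_1-PX_1$, expanding via (\ref{2.11}), (\ref{2.13}), (\ref{2.14}), and using both $\nabla_ZX_1=(X_1\ln f)Z$ (Proposition~2.1) and $g(\nabla_ZW,V)=-(V\ln f)g(Z,W)$ for $V\in\Gamma(TM_1)$ (from metric compatibility and Proposition~2.1), the bookkeeping yields
\[
g(h(Z,W),QX_1)+g(h(X_1,Z),QW)=-\alpha\eta(X_1)g(Z,W)-(X_1\ln f)g(W,PZ)+(PX_1\ln f)g(Z,W).
\]
Swapping $Z\leftrightarrow W$ and subtracting, the symmetry $g(Z,PW)=g(W,PZ)$ (since $P$ is self-adjoint) kills the $(X_1\ln f)$-term, so $g(h(X_1,Z),QW)$ is symmetric in $Z,W$. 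Combined with the antisymmetry (\ref{bisl-B}), we conclude $g(h(X_1,Z),QW)=0$ for every admissible $X_1,Z,W$, and the identity collapses to
\[
g(h(Z,W),QX_1)=-\alpha\eta(X_1)g(Z,W)-(X_1\ln f)g(W,PZ)+(PX_1\ln f)g(Z,W).
\]

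Finally, I specialize to $X_1=\xi$. By (\ref{2.9}), $\phi\xi=0$ gives $P\xi=Q\xi=0$, while $\eta(\xi)=-1$; hence the left side vanishes and we obtain $(\xi\ln f)\,g(W,PZ)=\alpha\,g(Z,W)$. Independently, (\ref{2.4}) gives $\bar\nabla_Z\xi=\alpha Z$ for $Z\in\Gamma(TM_2)$ (since $\eta(Z)=0$), which is tangent to $M$, so $h(Z,\xi)=0$ and $\nabla_Z\xi=\alpha Z$; comparison with Proposition~2.1 forces $\xi\ln f=\alpha\neq0$. Substituting, $g(W,PZ)=g(Z,W)$ for all $Z,W\in\Gamma(TM_2)$, and since $P$ is self-adjoint and $g|_{TM_2}$ is non-degenerate (positive definite on the spacelike orthogonal complement of $\xi$), this forces $PW=W$ for every $W\in TM_2$. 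But (\ref{2.17}) then gives $\cos^2\theta_2=1$, contradicting $\theta_2\in(0,\pi/2)$; hence no such warped product exists.

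The main obstacle is the direct calculation in the second step: tracking the $\phi$-structure through $\bar\nabla_Z\phi X_1$ with neither $Z$ nor $X_1$ in an anti-invariant distribution means both the surviving $\alpha\eta(X_1)$-term from (\ref{2.11}) and the $(PX_1\ln f)$-term coming from $g(\nabla_ZW,PX_1)$ must be carried along correctly. Once that identity is in hand, the chain of symmetrizations together with the single identity $\xi\ln f=\alpha$ finishes the proof cleanly.
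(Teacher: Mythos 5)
Your argument is correct, and after the first step it diverges from the paper's. Both proofs begin identically: substituting Proposition 2.1 into Lemma 5.1 and using $\eta(X_2)=0$ to cancel the $g(\nabla_{X_1}PX_2,PY_2)$ term against the left-hand side, yielding $g(h(X_1,PX_2),QY_2)+g(h(X_1,Y_2),QPX_2)=0$, which is the paper's (\ref{5.3}). From there the paper simply computes the same combination a second time, expanding $g(\bar{\nabla}_{X_1}PX_2,\phi Y_2)$ directly with Proposition 2.1 to get the value $2\cos^2\theta_2(X_1\ln f)g(X_2,Y_2)$ (its (\ref{5.2})); comparing the two evaluations gives $\cos^2\theta_2(X_1\ln f)=0$, hence $f$ is constant. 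You instead move to an identity for $g(h(Z,W),QX_1)$ with both $Z,W$ in the fiber, specialize to $X_1=\xi$, and combine it with the relation $\xi\ln f=\alpha$ (obtained from $\bar{\nabla}_Z\xi=\alpha Z$ and Proposition 2.1) to force $P=\mathrm{id}$ on $TM_2$ and hence $\cos^2\theta_2=1$, contradicting $\theta_2\neq 0$. Your route is longer but produces genuine by-products the paper does not record: the mixed condition $g(h(X_1,Z),QW)=0$, the explicit formula for $g(h(Z,W),QX_1)$, and the fact $\xi\ln f=\alpha\neq 0$ (which, incidentally, together with the paper's conclusion $X_1\ln f=0$ gives an even more immediate contradiction). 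Two small remarks: the antisymmetrization step (your equation for $g(h(X_1,X_2),QY_2)+g(h(X_1,Y_2),QX_2)$) and the subsequent symmetrization in $Z,W$ are actually not needed for your endgame, since at $X_1=\xi$ one has $Q\xi=0$ and $h(Z,\xi)=0$, so the mixed terms drop out of your identity automatically; and the inclusion $PX_2\in\Gamma(TM_2)$ follows from $\phi\mathcal{D}_2\perp\mathcal{D}_1$ rather than $\phi\mathcal{D}_1\perp\mathcal{D}_2$, a harmless slip since both are assumed.
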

\begin{proof}
Let $M=M_{1}\times_fM_2$ be a proper warped product bi-slant submanifold of $\bar{M}$. Then for $X_1\in\Gamma(TM_1)$ and
$X_2,\ Y_2\in \Gamma(TM_2)$, we have
\begin{eqnarray*}
% \nonumber % Remove numbering (before each equation)
  g(h(X_1,PX_2),QY_2) &=& g(\bar{\nabla}_{X_1}PX_2,\phi Y_2)+g(PX_2\bar{\nabla}_{X_1}PY_2) \\
   &=& \cos^2\theta_2g(\bar{\nabla}_{X_1}X_2,Y_2)-g(h(X_1,Y_2),QPX_2)+g(PX_2,\bar{\nabla}_{X_1}PY_2).
\end{eqnarray*}
Using Proposition 2.1 in the above relation, we get
\begin{equation}\label{5.2}
g(h(X_1,PX_2),QY_2)+g(h(X_1,Y_2),QPX_2)=2\cos^2\theta_2(X_1 \ln f)g(X_2,Y_2).
\end{equation}
Again using Proposition 2.1 in (\ref{5.1}), we get
\begin{equation}\label{5.3}
g(h(X_1,PX_2),QY_2)+g(h(X_1,Y_2),QPX_2)=0.
\end{equation}
From (\ref{5.2}) and (\ref{5.3}), we get
\begin{equation}\label{4.4}
\cos^2\theta_2(X_1\ln f)=0.
\end{equation}
Since $M$ is a proper warped product bi-slant submanifold so, $\theta_2\neq\frac{\pi}{2}$. Therefore $(X_1\ln f)=0$ for every
$X_1\in\Gamma(TM_1)$ and hence $M$ does not exists.
\end{proof}
%%%%%%%%%%%%%%%%%%%%%%%%%%%%%%%%%%%%%%%%%%%

%%%%%%%%%%%%%%%%%%%%%%%
\vspace{0.1in}
\noindent S. K. Hui and T.PaL\\
Department of Mathematics, The University of Burdwan, Golapbag, Burdwan -- 713104, West Bengal, India\\
E-mail: skhui@math.buruniv.ac.in, tanumoypalmath@gmail.com\\

\noindent L.-I. Piscoran\\
 North University Center of Baia Mare, Technical University of Cluj Napoca, Department of Mathematics and Computer Science, Victoriei 76, 430122 Baia Mare, Romania\\
E-mail: plaurian@yahoo.com\\
%%%%%%%%%%%%%%%%%%%%%%
\end{document}